\newtheorem{theorem}{Theorem}
\newtheorem{proposition}{Proposition}
\newtheorem{lemma}{Lemma}
\newtheorem{corollary}{Corollary}
\newtheorem{observation}{Observation}
\newtheorem{conjecture}{Conjecture}
\begin{document}
\title{\Large \bf On the generalized (edge-)connectivity of
graphs\footnote{Supported by NSFC No.11071130, and ``the
Fundamental Research Funds for the Central Universities"}}
\author{\small Xueliang Li, Yaping Mao, Yuefang Sun
\\
\small Center for Combinatorics and LPMC-TJKLC
\\
\small Nankai University, Tianjin 300071, China
\\
\small lxl@nankai.edu.cn; maoyaping@ymail.com;
\small bruceseun@gmail.com}
\date{}
\maketitle
\begin{abstract}
The generalized $k$-connectivity $\kappa_k(G)$ of a graph $G$ was
introduced by Chartrand et al. in 1984. It is natural to introduce
the concept of generalized $k$-edge-connectivity $\lambda_k(G)$. For
general $k$, the generalized $k$-edge-connectivity of a complete
graph is obtained. For $k\geq 3$, tight upper and lower bounds of
$\kappa_k(G)$ and $\lambda_k(G)$ are given for a connected graph $G$
of order $n$, that is, $1\leq \kappa_k(G)\leq
n-\lceil\frac{k}{2}\rceil$ and $1\leq \lambda_k(G)\leq
n-\lceil\frac{k}{2}\rceil$. Graphs of order $n$ such that
$\kappa_k(G)=n-\lceil\frac{k}{2}\rceil$ and
$\lambda_k(G)=n-\lceil\frac{k}{2}\rceil$ are characterized,
respectively. Nordhaus-Gaddum-type results for the generalized
$k$-connectivity are also obtained. For $k=3$, we study the relation
between the edge-connectivity and the generalized
$3$-edge-connectivity of a graph. Upper and lower bounds of
$\lambda_3(G)$ for a graph $G$ in terms of the edge-connectivity
$\lambda$ of $G$ are obtained, that is, $\frac{3\lambda-2}{4}\leq
\lambda_3(G)\leq \lambda$, and two graph classes are given showing
that the upper and lower bounds are tight. From these bounds, we
obtain that $\lambda(G)-1\leq \lambda_3(G)\leq \lambda(G)$ if $G$ is
a connected planar graph, and the relation between the generalized
$3$-connectivity and generalized $3$-edge-connectivity of a graph
and its line graph.

{\flushleft\bf Keywords}: (edge-)connectivity, internally
(edge-)disjoint trees, generalized (edge-)connectivity, planar
graph, line graph, complementary graph.\\[2mm]
{\bf AMS subject classification 2010:} 05C40, 05C05, 05C75, 05C76.
\end{abstract}

\section{Introduction}

All graphs in this paper are undirected, finite and simple. We refer
to book \cite{bondy} for graph theoretical notation and terminology
not described here. The generalized connectivity of a graph $G$,
introduced by Chartrand et al. in \cite{Chartrand1}, is a natural
and nice generalization of the concept of (vertex-)connectivity. For
a graph $G=(V,E)$ and a set $S\subseteq V$ of at least two vertices,
\emph{an $S$-Steiner tree} or \emph{a Steiner tree connecting $S$}
(or simply, \emph{an $S$-tree}) is a such subgraph $T=(V',E')$ of
$G$ that is a tree with $S\subseteq V'$. Two Steiner trees $T$ and
$T'$ connecting $S$ are \emph{internally disjoint} if $E(T)\cap
E(T')=\varnothing$ and $V(T)\cap V(T')=S$. For $S\subseteq V(G)$,
the \emph{generalized local connectivity} $\kappa(S)$ of $S$ is the
maximum number of internally disjoint trees connecting $S$ in $G$.
The \emph{generalized $k$-connectivity} of $G$, denoted by
$\kappa_k(G)$, is then defined as $\kappa_k(G)=min\{\kappa(S) |
S\subseteq V(G) \  and \ |S|=k \}$. Thus, $\kappa_2(G)=\kappa(G)$.
Set $\kappa_k(G)=0$ when $G$ is disconnected. Results on the
generalized connectivity can be found in \cite{Chartrand4, LL, LLL,
LLL2, LLShi, LLSun, LLZ, Okamoto}.

A natural idea is to introduce the concept of generalized
edge-connectivity. Let $\lambda(S)$ denote the maximum number $\ell$
of pairwise edge-disjoint trees $T_1, T_2, \cdots,$ $ T_{\ell}$ in
$G$ such that $V(T_i)\supseteq S$ for every $1\leq i\leq \ell$ (Note
that these (edge-disjoint) trees are \emph{Steiner trees}). Then the
\emph{generalized $k$-edge-connectivity} $\lambda_k(G)$ of $G$ is
defined as $\lambda_k(G)= min\{\lambda(S) | S\subseteq V(G) \ and \
|S|=k\}$. Thus $\lambda_2(G)=\lambda(G)$. Set $\lambda_k(G)=0$ when
$G$ is disconnected.

The generalized edge-connectivity is related to an important
problem, which is called the \emph{Steiner Tree Packing Problem}.
For a given graph $G$ and $S\subseteq V(G)$, this problem asks to
find a set of maximum number of edge-disjoint Steiner trees
connecting $S$ in $G$. The difference between the Steiner Tree
Packing Problem and the generalized edge-connectivity is as follows:
The Steiner Tree Packing Problem studies local properties of graphs
since $S$ is given beforehand, but the generalized edge-connectivity
focuses on global properties of graphs since it first needs to find
the maximum number $\lambda(S)$ of edge-disjoint trees connecting
$S$ and then $S$ runs over all $k$-subsets of $V(G)$ to get the
minimum value of $\lambda(S)$.

The problem for $S=V(G)$ is called the \emph{Spanning Tree Packing
Problem} (Note that the Steiner Tree Packing Problem is a
generalization of the Spanning Tree Packing Problem). For any graph
$G$ of order $n$, the \emph{spanning tree packing number} or
\emph{$STP$ number}, is the maximum number of edge-disjoint spanning
trees contained in $G$. For the spanning tree packing number, Palmer
gave a good survey, see \cite{Palmer}. One can see that the $STP$
number of a graph $G$ is just $\kappa_n(G)$ or $\lambda_n(G)$.

In addition to being natural combinatorial measures, the generalized
connectivity and generalized edge-connectivity can be motivated by
their interesting interpretation in practice as well as theoretical
consideration.

From a theoretical perspective, both extremes of this problem are
fundamental theorems in combinatorics. One extreme of the problem is
when we have two terminals. In this case internally (edge-)disjoint
trees are just internally (edge-)disjoint paths between the two
terminals, and so the problem becomes the well-known Menger theorem.
The other extreme is when all the vertices are terminals. In this
case internally disjoint trees and edge-disjoint trees are just
spanning trees of the graph, and so the problem becomes the
classical Nash-Williams-Tutte theorem (for short proofs, see
\cite{Hakimi}).

\begin{theorem}(Nash-Williams \cite{Nash},Tutte \cite{Tutte})\label{th1}
A multigraph $G$ contains a system of $k$ edge-disjoint spanning
trees if and only if
$$
\|G/\mathscr{P}\|\geq k(|\mathscr{P}|-1)
$$
holds for every partition $\mathscr{P}$ of $V(G)$, where
$\|G/\mathscr{P}\|$ denotes the number of edges in $G$ between
distinct blocks of $\mathscr{P}$.
\end{theorem}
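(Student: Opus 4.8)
The plan is to establish the two implications separately, treating the forward (necessity) direction as routine and reserving the real work for the converse (sufficiency).

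For necessity, suppose $G$ contains $k$ edge-disjoint spanning trees $T_1,\dots,T_k$ and fix any partition $\mathscr{P}$ of $V(G)$ into $p=|\mathscr{P}|$ blocks. Contracting each block of $\mathscr{P}$ to a single vertex sends each $T_i$ to a connected multigraph on $p$ vertices, since contraction preserves connectivity and each $T_i$ is spanning; hence each $T_i$ uses at least $p-1$ edges running between distinct blocks. As the $T_i$ are pairwise edge-disjoint, these crossing edges are all distinct, so $\|G/\mathscr{P}\|\ge k(p-1)=k(|\mathscr{P}|-1)$. This step presents no difficulty.

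For sufficiency I would pass to the cycle matroid $M(G)$ and invoke the matroid union theorem. After reducing to the case where $G$ is connected (if $G$ is disconnected, both the existence of a spanning tree and the partition condition already fail on the partition into components), the bases of $M(G)$ are exactly the spanning trees. Writing $r$ for the rank function, the base-packing corollary of matroid union states that $M(G)$ contains $k$ pairwise disjoint bases if and only if $|E\setminus Y|\ge k\bigl(r(E)-r(Y)\bigr)$ for every $Y\subseteq E$. Substituting $r(E)=n-1$ and $r(Y)=n-c(Y)$, where $c(Y)$ is the number of components of the spanning subgraph $(V,Y)$, this criterion becomes $|E\setminus Y|\ge k\bigl(c(Y)-1\bigr)$ for all $Y\subseteq E$.

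The main obstacle is then to identify this edge-subset criterion with the stated partition criterion; both directions are short once set up correctly. Given a partition $\mathscr{P}$, let $Y$ be the set of edges having both ends in a common block: the components of $(V,Y)$ refine $\mathscr{P}$, so $c(Y)\ge|\mathscr{P}|$, while $|E\setminus Y|=\|G/\mathscr{P}\|$, and the matroid inequality yields $\|G/\mathscr{P}\|\ge k(|\mathscr{P}|-1)$. Conversely, given any $Y\subseteq E$, take $\mathscr{P}$ to be the partition of $V$ into the components of $(V,Y)$, so that $|\mathscr{P}|=c(Y)$ and every crossing edge avoids $Y$, whence $\|G/\mathscr{P}\|\le|E\setminus Y|$ and the partition inequality forces the matroid inequality. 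Combining this equivalence with the matroid union theorem completes the argument. If one wishes to avoid matroid machinery, the alternative is a direct augmentation argument that starts from $k$ edge-disjoint forests and, whenever a forest fails to span, exchanges edges along an alternating sequence to merge two components while monitoring the total number of used edges; the bookkeeping in that argument is where the difficulty concentrates, and it essentially re-derives the matroid union theorem in this special case.
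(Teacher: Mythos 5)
The paper offers no proof of this statement: it is quoted as the classical Nash-Williams--Tutte packing theorem, with the original papers and the short proofs of Hakimi, Mitchem and Schmeichel cited in lieu of an argument. Your proposal is a correct proof, but it follows the later matroid-theoretic route rather than the direct combinatorial arguments of those references. The necessity half (contract each block of $\mathscr{P}$, note that each spanning tree must still connect the $|\mathscr{P}|$ contracted vertices and hence contributes at least $|\mathscr{P}|-1$ crossing edges, then use edge-disjointness) is exactly right and is common to every treatment. For sufficiency you invoke Edmonds' base-packing corollary of the matroid union theorem for the cycle matroid, and the translation between the edge-subset criterion $|E\setminus Y|\ge k\bigl(r(E)-r(Y)\bigr)$ and the partition criterion $\|G/\mathscr{P}\|\ge k(|\mathscr{P}|-1)$ is carried out correctly in both directions: from a partition you take $Y$ to be the within-block edges and use $c(Y)\ge|\mathscr{P}|$, and from an arbitrary $Y$ you take $\mathscr{P}$ to be the components of $(V,Y)$; the reduction to connected $G$ is also handled correctly, since for disconnected $G$ both sides of the equivalence fail on the partition into components. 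What this route buys is brevity and a clean conceptual frame (spanning trees are bases of $M(G)$, and the rank formula $r(Y)=n-c(Y)$ does all the bookkeeping); what it costs is that the entire difficulty is outsourced to matroid union, a theorem of essentially the same depth as the one being proved, which you candidly acknowledge. The derivation is non-circular and standard, so as a proof modulo that prior result it is sound; as a self-contained argument it would require either proving matroid union or fleshing out the alternating-sequence augmentation you sketch in a single sentence, which is precisely where Nash-Williams' and Tutte's original work (and the short proofs cited in the paper) concentrate their effort.
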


The following corollary is immediate from Theorem \ref{th1}.

\begin{corollary}\label{cor2}
Every $2\ell$-edge-connected graph contains a system of $\ell$
edge-disjoint spanning trees.
\end{corollary}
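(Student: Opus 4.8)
The plan is to apply Theorem \ref{th1} directly, so that the entire task reduces to verifying the partition inequality $\|G/\mathscr{P}\|\geq \ell(|\mathscr{P}|-1)$ for every partition $\mathscr{P}$ of $V(G)$, under the hypothesis that $G$ is $2\ell$-edge-connected.

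First I would dispose of the trivial partition $\mathscr{P}=\{V(G)\}$, for which $|\mathscr{P}|=1$ and both sides of the inequality vanish, so it holds with equality. For the main case, let $\mathscr{P}=\{V_1,\ldots,V_r\}$ with $r=|\mathscr{P}|\geq 2$. Each block $V_i$ is then a proper nonempty subset of $V(G)$, so the set of edges having exactly one endpoint in $V_i$ constitutes an edge cut separating $V_i$ from $V(G)\setminus V_i$. Since $G$ is $2\ell$-edge-connected, this cut has at least $2\ell$ edges; write $d(V_i)$ for its size, so that $d(V_i)\geq 2\ell$.

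The key counting step is to observe that every edge joining two distinct blocks is counted exactly twice in the sum $\sum_{i=1}^{r} d(V_i)$, once at each of its two endpoints, whereas edges lying inside a single block contribute nothing. Hence $\sum_{i=1}^{r} d(V_i)=2\|G/\mathscr{P}\|$, and combining this identity with the lower bound $d(V_i)\geq 2\ell$ gives
$$
2\|G/\mathscr{P}\|=\sum_{i=1}^{r} d(V_i)\geq 2\ell r,
$$
so that $\|G/\mathscr{P}\|\geq \ell r\geq \ell(r-1)=\ell(|\mathscr{P}|-1)$. Since $\mathscr{P}$ was arbitrary, Theorem \ref{th1} then yields the desired system of $\ell$ edge-disjoint spanning trees.

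I do not anticipate a genuine obstacle here; the statement is essentially a one-line consequence of Nash-Williams--Tutte once the double-counting identity is in place. The only point deserving care is the justification that each block's boundary is a genuine edge cut of size at least $2\ell$, which requires $r\geq 2$ so that every block is a proper nonempty subset of $V(G)$ --- precisely the reason the trivial partition must be treated separately.
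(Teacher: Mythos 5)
Your proof is correct and is exactly the standard deduction the paper has in mind: the paper gives no proof beyond declaring the corollary ``immediate from Theorem \ref{th1}'', and your double-counting of the block boundaries $\sum_i d(V_i)=2\|G/\mathscr{P}\|\geq 2\ell\,|\mathscr{P}|$ is the canonical way to make that immediacy explicit. No gaps; the separate treatment of the trivial partition and the observation that each block boundary is a genuine edge cut are handled appropriately.
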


Kriesell \cite{Kriesell1} conjectured that this corollary can be
generalized for Steiner trees.

\begin{conjecture}(Kriesell \cite{Kriesell1})\label{con3}
If a set $S$ of vertices of $G$ is $2k$-edge-connected (See Section
$2$ for the definition), then there is a set of $k$ edge-disjoint
Steiner trees in $G$.
\end{conjecture}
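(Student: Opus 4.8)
The plan is to reduce Kriesell's conjecture to the spanning-tree case already settled in Corollary \ref{cor2}, by eliminating the non-terminal (Steiner) vertices through edge splitting-off. Recall that ``$S$ is $2k$-edge-connected'' means that the local edge-connectivity $\lambda_G(x,y)$ is at least $2k$ for every pair $x,y\in S$. I would process the vertices of $V(G)\setminus S$ one at a time. At such a vertex $v$, I would repeatedly split off pairs of edges (delete $vu$ and $vw$, add a new edge $uw$), invoking Mader's and Lov\'asz's splitting-off theorems, which guarantee that a splitting can always be chosen so that every local edge-connectivity $\lambda(x,y)$ with $x,y\neq v$ is preserved. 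Each split lowers $\deg(v)$ by two, so after a complete splitting $v$ becomes isolated (or a single pendant, if $\deg(v)$ was odd) and can be deleted without disturbing $\lambda(x,y)$ for any $x,y\in S$.

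Once every non-terminal vertex is removed, we are left with a multigraph $G'$ on the vertex set $S$ with $\lambda_{G'}(x,y)\geq 2k$ for all $x,y\in S$; since $S=V(G')$, this says exactly that $G'$ is $2k$-edge-connected, so Corollary \ref{cor2} supplies $k$ edge-disjoint spanning trees $T_1',\dots,T_k'$ of $G'$. The last step is to lift these back to $G$. Each edge of $G'$ is either an original edge of $G$ or a split edge $uw$ created when some pair $vu,vw$ was removed at a non-terminal vertex $v$; I would replace every split edge $uw$ occurring in some $T_i'$ by the corresponding path $u\,v\,w$. Because each split edge is manufactured from a unique pair of original edges and lies in at most one $T_i'$, the replacements consume pairwise disjoint sets of original edges, so the resulting subgraphs $T_1,\dots,T_k$ stay edge-disjoint; each is connected and contains $S$, and hence contains a Steiner tree connecting $S$. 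This would produce the desired $k$ edge-disjoint Steiner trees.

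The hard part, and the reason this statement is still only a conjecture, is that the splitting-off step cannot in general be driven to completion. Mader's theorem fails precisely when the vertex being split has degree $3$ or is incident with a bridge, and under the bare hypothesis $\lambda(x,y)\geq 2k$ on $S$ these exceptional local configurations at a non-terminal vertex cannot be excluded. A stuck degree-$3$ Steiner vertex seems to force either an intricate local case analysis or a strengthening of the connectivity assumption, which is exactly the source of the constant-factor loss in every known partial result, where $2k$ is replaced by a larger multiple of $k$. I would also expect little help from a min-max certificate: unlike the spanning-tree case governed by Theorem \ref{th1}, there is no Nash-Williams--Tutte-type partition characterization for Steiner tree packing (the maximum packing is NP-hard to compute), so the clean partition argument that powers the corollary has no direct analogue here. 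For these reasons I would treat the degree-$3$ splitting obstruction as the decisive difficulty, and would attempt the fractional relaxation first, where LP duality makes the $2k$ bound accessible, before confronting the integrality gap that the conjecture ultimately asks one to close.
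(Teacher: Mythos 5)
The statement you were asked to prove is not proved in the paper at all: it is Kriesell's conjecture, stated there as an open problem (and still open in full generality at the time of the paper; the paper only cites partial results such as Lau's and Wu--West's, where $2k$ is replaced by a larger multiple of $k$, and Lemma \ref{lem1}, which handles $|S|=3$ with the threshold $\lfloor\frac{8t+3}{6}\rfloor$ rather than $2t$). So there is no ``paper's own proof'' to compare against, and your proposal cannot be judged correct as a proof --- indeed, you candidly say so yourself.

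What you wrote is the standard and natural line of attack: split off the Steiner vertices of $V(G)\setminus S$ using Mader/Lov\'asz admissible splittings, reduce to a multigraph on $S$ that is globally $2k$-edge-connected, apply the Nash-Williams--Tutte corollary (Corollary \ref{cor2}, which does apply to multigraphs), and lift the spanning trees back by replacing each split edge $uw$ with the path $u\,v\,w$. The conditional part of this argument is sound, and your lifting step is correct since distinct split edges consume disjoint pairs of original edges. The genuine gap is exactly where you locate it: Mader's theorem gives no admissible splitting at a vertex of degree $3$ or at a vertex incident with a cut edge, and under the hypothesis that only the pairs inside $S$ are $2k$-edge-connected, such stuck configurations at non-terminal vertices cannot be ruled out; moreover, one only needs to preserve $\lambda(x,y)$ for $x,y\in S$ rather than for all pairs, and no splitting-off theorem strong enough for that relaxed requirement is known. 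Since this obstruction is precisely what keeps the statement a conjecture, your submission should be read as a correct identification of the difficulty rather than a proof; it would be a mistake to present the splitting-off step as if it were available.
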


Motivated by this conjecture, the Steiner Tree Packing Problem has
obtained wide attention and many results have been worked out, see
\cite{Jain, Kriesell1, Kriesell2, Lau, West}.

The generalized edge-connectivity and the Steiner Tree Packing
Problem have applications in $VLSI$ circuit design, see
\cite{Grotschel1, Grotschel2, Sherwani}. In this application, a
Steiner tree is needed to share an electronic signal by a set of
terminal nodes. Another application, which is our primary focus,
arises in the Internet Domain. Imagine that a given graph $G$
represents a network. We choose arbitrary $k$ vertices as nodes.
Suppose one of the nodes in $G$ is a \emph{broadcaster}, and all the
other nodes are either \emph{users} or \emph{routers} (also called
\emph{switches}). The broadcaster wants to broadcast as many streams
of movies as possible, so that the users have the maximum number of
choices. Each stream of movie is broadcasted via a tree connecting
all the users and the broadcaster. So, in essence we need to find
the maximum number Steiner trees connecting all the users and the
broadcaster, namely, we want to get $\lambda (S)$, where $S$ is the
set of the $k$ nodes. Clearly, it is a Steiner tree packing problem.
Furthermore, if we want to know whether for any $k$ nodes the
network $G$ has the above properties, then we need to compute
$\lambda_k(G)=min\{\lambda (S)\}$ in order to prescribe the
reliability and the security of the network.

For general $k$, the generalized $k$-edge-connectivity of a complete
graph is obtained. Tight upper and lower bounds of $\kappa_k(G)$ and
$\lambda_k(G)$ are given for a connected graph $G$ of order $n$,
that is, $1\leq \kappa_k(G)\leq n-\lceil\frac{k}{2}\rceil$ and
$1\leq \lambda_k(G)\leq n-\lceil\frac{k}{2}\rceil$.

By Nash-Williams-Tutte theorem, graphs of order $n$ such that
$\kappa_k(G)=n-\lceil\frac{k}{2}\rceil$ and
$\lambda_k(G)=n-\lceil\frac{k}{2}\rceil$ are characterized,
respectively. Nordhaus-Gaddum-type results for the generalized
$k$-connectivity are also obtained in Section $3$. For $k=3$, we
study the relation between the edge-connectivity and the generalized
$3$-edge-connectivity of a graph. Kriesell in \cite{Kriesell1}
showed that for any two natural numbers $t,\ell$ there exists a
smallest natural number $f_{\ell}(t)$ ($g_{\ell}(t)$) such that for
any $f_{\ell}(t)$-edge-connected ($g_{\ell}(t)$-edge-connected)
vertex set $S$ of a graph $G$ with $|S|\leq \ell$ ($|V(G)-S|\leq
\ell$) there exists a system $\mathscr{T}$ of $t$ edge-disjoint
trees such that $S\subseteq V(T)$ for each $T\in \mathscr{T}$. He
determined $f_3(t)=\lfloor\frac{8t+3}{6}\rfloor$. In Section 4, we
use his result to derive a tight lower bound of $\lambda_3(G)$. We
also give a tight upper bound of $\lambda_k(G)$. Altogether we get
that $\frac{3\lambda-2}{4}\leq \lambda_3(G)\leq \lambda$. Two graph
classes are given showing that the upper and lower bounds are tight.
From these bounds, we obtain two results: one is $\lambda(G)-1\leq
\lambda_3(G)\leq \lambda(G)$ if $G$ is a connected planar graph, the
other is the relation between the generalized $3$-connectivity and
generalized $3$-edge-connectivity of a graph and its line graph.

\section{Preliminaries}

For a graph $G$, let $V(G)$, $E(G)$, $|G|$, $\|G\|$, $L(G)$ and
$\overline{G}$ denote the set of vertices, the set of edges, the
order, the size, the line graph and the complement graph of $G$,
respectively. As usual, the {\it union} of two graphs $G$ and $H$ is
the graph, denoted by $G\cup H$, with vertex set $V(G)\cup V(H)$ and
edge set $E(G)\cup E(H)$. For $S\subseteq V(G)$, we denote by
$G\setminus S$ the subgraph obtained by deleting from $G$ the
vertices of $S$ together with the edges incident with them. If
$S=\{v\}$, we simply write $G\setminus v$ for $G\setminus \{v\}$. If
$S$ is a subset of vertices of a graph $G$, the subgraph of $G$
induced by $S$ is denoted by $G[S]$. If $M$ is the edge subset of
$G$, then $G\setminus M$ denotes the subgraph obtained by deleting
the edges of $M$ from $G$. $G\setminus \{e\}$ is abbreviated to
$G\setminus e$. If $M$ is a subset of edges of a graph $G$, the
subgraph of $G$ induced by $M$ is denoted by $G[M]$. We denote by
$E_G[X,Y]$ the set of edges of $G$ with one end in $X$ and the other
in $Y$. If $X=\{x\}$, we simply write $E_G[x,Y]$ for $E_G[\{x\},Y]$.

Chartrand et al. in \cite{Chartrand4} obtained the first result in
the generalized connectivity.

\begin{theorem}\cite{Chartrand4}\label{th2}
For every two integers $n$ and $k$ with $2\leq k\leq n$,
$$
\kappa_k(K_n)=n-\lceil k/2\rceil.
$$
\end{theorem}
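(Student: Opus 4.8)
The plan is to establish the two inequalities $\kappa_k(K_n)\le n-\lceil k/2\rceil$ and $\kappa_k(K_n)\ge n-\lceil k/2\rceil$ separately. Since $K_n$ is vertex-transitive, the local quantity $\kappa(S)$ takes the same value for every $k$-subset $S$, so it suffices to fix one set $S=\{v_1,\dots,v_k\}$ and evaluate $\kappa(S)$; I would write $W=V(K_n)\setminus S$ for the set of the remaining $n-k$ vertices, which play the role of the available Steiner vertices.

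For the upper bound I would take an arbitrary family $T_1,\dots,T_\ell$ of internally disjoint trees connecting $S$ and split it into two classes: call $T_i$ of \emph{type A} if $V(T_i)$ meets $W$, and of \emph{type B} otherwise. Because every tree connecting $S$ satisfies $S\subseteq V(T_i)$, a type-B tree has $V(T_i)=S$ exactly, hence is a spanning tree of the induced $G[S]\cong K_k$ using exactly $k-1$ edges, all lying inside $S$. Internal disjointness makes these trees pairwise edge-disjoint, so their number is at most $\binom{k}{2}/(k-1)=k/2$, i.e.\ at most $\lfloor k/2\rfloor$. Internal disjointness also forces $V(T_i)\cap V(T_j)=S$, so each vertex of $W$ lies in at most one tree; as every type-A tree contains a vertex of $W$, there are at most $|W|=n-k$ of them. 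Adding the counts gives $\ell\le (n-k)+\lfloor k/2\rfloor=n-\lceil k/2\rceil$.

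For the lower bound I would exhibit exactly this many internally disjoint trees. For each $w\in W$ take the star $S_w$ centered at $w$ and joined to all of $v_1,\dots,v_k$; these $n-k$ stars pairwise meet only in $S$ and use only edges between $W$ and $S$. Inside $G[S]\cong K_k$ I would then select $\lfloor k/2\rfloor$ pairwise edge-disjoint spanning trees; each such tree meets every star only in $S$ and uses only edges inside $S$, so it is internally disjoint both from the stars and from the other spanning trees. Together this produces $(n-k)+\lfloor k/2\rfloor=n-\lceil k/2\rceil$ internally disjoint trees connecting $S$.

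The one nontrivial ingredient, and the step I expect to be the main obstacle, is the existence of $\lfloor k/2\rfloor$ edge-disjoint spanning trees in $K_k$. I would deduce this from the Nash-Williams--Tutte theorem (Theorem~\ref{th1}): for a partition $\mathscr{P}$ of $V(K_k)$ into $r$ blocks of sizes $n_1,\dots,n_r$, the number of crossing edges equals $\binom{k}{2}-\sum_i\binom{n_i}{2}$, which by convexity of $\binom{\cdot}{2}$ is minimized when one block has size $k-r+1$ and the remaining $r-1$ blocks are singletons; a short computation shows this minimum equals $\tfrac{(r-1)(2k-r)}{2}\ge \lfloor k/2\rfloor(r-1)$, so the Nash-Williams--Tutte condition holds with parameter $\lfloor k/2\rfloor$. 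Combining the two bounds yields $\kappa(S)=n-\lceil k/2\rceil$ for every $k$-set $S$, and hence $\kappa_k(K_n)=n-\lceil k/2\rceil$.
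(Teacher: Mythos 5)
Your proof is correct. Note first that the paper does not actually prove this statement: Theorem \ref{th2} is imported from \cite{Chartrand4} and used as a black box (for instance, to supply the lower bound in the proof of Theorem \ref{th3}), so there is no in-paper argument to compare against line by line. Both halves of your argument are sound. The upper bound, splitting the internally disjoint trees into those that use a Steiner vertex outside $S$ (at most $n-k$ of them, since $V(T_i)\cap V(T_j)=S$ prevents two trees from sharing a vertex of $W$) and those living entirely inside $S$ (at most $\lfloor k/2\rfloor$ by counting the $k-1$ edges each uses in $K_k$), is the vertex-counting analogue of the edge-counting argument the paper develops for $\lambda_k(K_n)$ in Lemma \ref{lem4} and Theorem \ref{th3}; there the authors must charge everything to $E(G[S])\cup E_G[S,\bar{S}]$ because edge-disjoint trees may reuse Steiner vertices, whereas in your setting the cruder vertex count on $W$ suffices. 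Your lower-bound construction (the $n-k$ stars centered in $W$ together with $\lfloor k/2\rfloor$ edge-disjoint spanning trees of $K_k$ obtained from Theorem \ref{th1}) is also correct: the convexity step is right, and the inequality $\frac{(r-1)(2k-r)}{2}\geq \lfloor k/2\rfloor (r-1)$ holds simply because $r\leq k$ gives $2k-r\geq k\geq 2\lfloor k/2\rfloor$. This mirrors the paper's own use of Nash--Williams--Tutte in Lemma \ref{lem7} and in Case 1 of the proof of Theorem \ref{th4}. In short, you have supplied a valid, self-contained proof of a result the paper only cites.
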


For distinct vertices $x,y$ in $G$, let $\lambda(x,y;G)$ denote the
local edge-connectivity of $x$ and $y$. $S\subseteq V(G)$ is called
\emph{$n$-edge-connected}, if $\lambda(x,y;G)\geq n$ for all $x\neq
y$ in $S$. In \cite{Kriesell1}, Kriesell gave the following result.

\begin{lemma}\cite{Kriesell1}\label{lem1}
Let $t\geq 1$ be a natural number, and $G$ be a graph, and let
$\{a,b,c\}\subseteq V(G)$ be
$\lfloor\frac{8t+3}{6}\rfloor$-edge-connected in $G$. Then there
exists a system of $t$ edge-disjoint $\{a,b,c\}$-trees.
\end{lemma}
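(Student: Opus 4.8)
The plan is to reduce $G$, by splitting off edges at the non-terminal vertices, to a multigraph on the three terminals $\{a,b,c\}$ whose pairwise local edge-connectivities are no smaller than those in $G$, and then to count edge-disjoint spanning trees in this reduced multigraph using the Nash-Williams--Tutte theorem (Theorem~\ref{th1}). The point is that on three vertices an $\{a,b,c\}$-tree is exactly a spanning tree, so Theorem~\ref{th1} computes the packing number there exactly; and any family of edge-disjoint spanning trees in the reduced multigraph lifts to a family of edge-disjoint $\{a,b,c\}$-trees in $G$ of the same size, because every split-off edge encodes a path through a deleted vertex and distinct split-off edges use disjoint original edges. Thus it suffices to show the reduced multigraph packs $t$ spanning trees.

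First I would carry out the reduction. A non-terminal leaf is incident with a bridge and can be deleted without changing $\lambda(x,y;G)$ for $x,y\in\{a,b,c\}$; more generally, for $t\ge 2$ we have $m\ge 2$, so no bridge can separate two terminals, and every bridge therefore cuts off a terminal-free piece that may be pruned (the case $t=1$, where $m=1$, is trivial). A non-terminal vertex of degree $2$ is suppressed. At a non-terminal vertex $s$ of even degree I would apply Mader's splitting-off theorem to split off admissible pairs of edges one at a time, each step lowering $d(s)$ by two and preserving $\lambda(x,y)$ for all $x,y\ne s$, until $s$ is isolated and deleted.

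The arithmetic core is clean and pins down the constant. In the final multigraph let $x,y,z$ be the numbers of parallel edges joining the pairs $ab$, $bc$, $ca$. A direct cut computation gives $\lambda(a,b)=x+\min(y,z)$, $\lambda(b,c)=y+\min(x,z)$ and $\lambda(a,c)=z+\min(x,y)$. Since each of these is $\ge m$, each pairwise sum satisfies $x+y,\,y+z,\,z+x\ge m\ge t$, and a short optimization shows that the minimum of $x+y+z$ subject to the three connectivity constraints equals $\lceil 3m/2\rceil$. A residue check for $m=\lfloor(8t+3)/6\rfloor$ gives $\lceil 3m/2\rceil\ge 2t$ (with equality when $t\equiv 0,1\pmod 3$), which is precisely why this value of $m$ is the right one. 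Hence $\lfloor(x+y+z)/2\rfloor\ge t$, and the Nash-Williams--Tutte packing number $\min\{\lfloor(x+y+z)/2\rfloor,\,x+y,\,y+z,\,z+x\}$ is at least $t$, giving the desired $t$ spanning trees, which lift back to $t$ edge-disjoint $\{a,b,c\}$-trees.

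The main obstacle is that Mader's theorem fails exactly at vertices of degree $3$: an even-degree vertex reduces all the way down to degree $0$, but an odd-degree vertex drops to degree $3$ and gets stuck, so in general the reduction halts at a graph all of whose non-terminal vertices have degree $3$ rather than at a $3$-vertex multigraph. The real content of the proof is to show that these residual cubic Steiner vertices cannot beat the bound, i.e.\ that $\lfloor(8t+3)/6\rfloor$-edge-connectivity of $\{a,b,c\}$ still forces $t$ trees in their presence. That the extremal examples are the balanced $3$-vertex multigraphs (which are $(m-1)$-edge-connected yet carry only $t-1$ trees, e.g.\ $x=1$, $y=z=2$ for $t=3$) shows both that the constant is best possible and that degree-$3$ vertices do not, in the end, make the bound worse; so the obstruction is technical, and resolving it calls for a more careful splitting argument at the degree-$3$ vertices that tracks the connectivity among $\{a,b,c\}$ directly rather than the blunt reduction above.
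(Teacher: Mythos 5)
The paper offers no proof of Lemma~\ref{lem1}: it is quoted verbatim from Kriesell \cite{Kriesell1}, so there is no in-paper argument to compare against and the only question is whether your sketch stands on its own. It does not, and you have in fact named the reason yourself. The skeleton is sound as far as it goes: pruning bridgeless-terminal-free pieces, lifting spanning trees of a split-off multigraph back to edge-disjoint $\{a,b,c\}$-trees, the identity $\lambda(a,b)=x+\min(y,z)$ on three vertices, the Nash-Williams--Tutte value $\min\{\lfloor(x+y+z)/2\rfloor,\,x+y,\,y+z,\,z+x\}$, and the verification that $\lceil 3m/2\rceil\ge 2t$ for $m=\lfloor(8t+3)/6\rfloor$ are all correct, and they do show that the constant is calibrated exactly by the three-vertex extremal configurations.

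The genuine gap is the one you flag in your last paragraph: Mader's splitting-off theorem is unavailable at non-terminal vertices of degree $3$, so the reduction terminates not at a multigraph on $\{a,b,c\}$ but at a graph all of whose remaining Steiner vertices are cubic, and nothing in your argument bounds the packing number of that residual graph. Proving that cubic Steiner vertices cannot push the number of edge-disjoint $\{a,b,c\}$-trees below $t$ is not a technicality to be deferred; it is the entire content of Kriesell's theorem, handled there by a minimal-counterexample analysis of exactly this configuration. As written, your argument establishes the lemma only in the special case where the splitting-off process happens to eliminate every Steiner vertex (for instance, when all of them have even degree); in general it is a correct and well-motivated reduction of the problem, not a proof of it.
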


Chartrand \cite{Chartrand2} et al. investigated the relation between
the connectivity and edge-connectivity of a graph and its line
graph.

\begin{lemma}\cite{Chartrand2}\label{lem2}
If $G$ is a connected graph, then

$(1)$ $\kappa(L(G))\geq \lambda(G)$ if $\lambda(G)\geq 2$.

$(2)$ $\lambda(L(G))\geq 2\lambda(G)-2$.

$(3)$ $\kappa(L(L(G)))\geq 2\kappa(G)-2$.
\end{lemma}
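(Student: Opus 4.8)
The plan is to exploit the standard correspondence between walks in $G$ and walks in its line graph $L(G)$: if $P$ is a path in $G$ whose consecutive edges, listed in order, are $g_1, g_2, \ldots, g_m$, then $g_1, g_2, \ldots, g_m$ is a path in $L(G)$, and a family of edge-disjoint paths in $G$ translates into a family of internally disjoint (hence edge-disjoint) paths in $L(G)$. Throughout I would combine this with Menger's theorem in both its vertex and edge forms, and with the degree identity $\deg_{L(G)}(e) = \deg_G(x) + \deg_G(y) - 2$ for an edge $e = xy$, which together with $\delta(G) \geq \lambda(G)$ controls the local structure of $L(G)$.

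For $(1)$, I take two vertices of $L(G)$, that is, two edges $e_1 = u_1 v_1$ and $e_2 = u_2 v_2$ of $G$, and seek $\lambda := \lambda(G)$ internally disjoint $e_1$–$e_2$ paths in $L(G)$. Since $\lambda(u_1, u_2; G) \geq \lambda(G) = \lambda$, the edge form of Menger's theorem supplies $\lambda$ edge-disjoint $u_1$–$u_2$ paths in $G$; prepending $e_1$ to and appending $e_2$ to the edge-sequence of each turns them into $\lambda$ paths from $e_1$ to $e_2$ in $L(G)$ that pairwise meet only at $e_1$ and $e_2$. The work lies in the degenerate cases — when a Menger path reuses $e_1$ or $e_2$, or when $e_1$ and $e_2$ already share an endpoint — where I would reroute through the other endpoints $v_1, v_2$; the hypothesis $\lambda(G) \geq 2$ is exactly what guarantees enough room to avoid the two distinguished edges while keeping the translated paths disjoint.

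For $(2)$, I instead aim at $2\lambda - 2$ edge-disjoint paths between $e_1$ and $e_2$. The point is that $e_1$ has at least $(\deg_G(u_1) - 1) + (\deg_G(v_1) - 1) \geq 2\lambda - 2$ neighbours in $L(G)$, namely the edges of $G$ meeting $u_1$ or $v_1$, and these must be routed edge-disjointly to $e_2$. I would formalize this by applying Menger in $G \setminus e_1 \setminus e_2$ to the terminal sets $\{u_1, v_1\}$ and $\{u_2, v_2\}$: combining the edge-disjoint $u_1$–$u_2$ paths with the edge-disjoint $v_1$–$v_2$ paths and discarding at most the paths forced to traverse $e_1$ or $e_2$ should, after the translation above, leave $2\lambda - 2$ edge-disjoint $e_1$–$e_2$ paths in $L(G)$. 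Pinning down this constant is the main obstacle: the two path families may share edges of $G$, so I expect to argue through a single min-cut estimate rather than a naive union, and to verify that the $-2$ correction is never needed more than once at each of $e_1$ and $e_2$.

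Finally, $(3)$ follows by iterating the first two parts. Using the basic inequality $\kappa(G) \leq \lambda(G)$ together with part $(2)$ gives $\lambda(L(G)) \geq 2\lambda(G) - 2 \geq 2\kappa(G) - 2$; applying part $(1)$ to the connected graph $L(G)$ — legitimate once $\lambda(L(G)) \geq 2$, which is the only nontrivial range since $\kappa(G) \leq 1$ makes $2\kappa(G) - 2 \leq 0$ — yields $\kappa(L(L(G))) \geq \lambda(L(G))$. Chaining these two inequalities then produces $\kappa(L(L(G))) \geq 2\kappa(G) - 2$, completing the proof.
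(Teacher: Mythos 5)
A preliminary remark: the paper offers no proof of this lemma --- it is imported verbatim from Chartrand and Stewart \cite{Chartrand2} --- so there is no in-paper argument to compare against, and I can only judge your sketch on its own terms. Parts $(1)$ and $(3)$ are essentially sound. In $(1)$ the translation of $\lambda$ edge-disjoint $u_1$--$u_2$ paths of $G$ into internally disjoint $e_1$--$e_2$ paths of $L(G)$ works, and the degenerate cases you flag are benign: a Menger path can contain $e_1$ only as its first edge (it meets $u_1$ exactly once), so you merely skip the prepending step there, and when $e_1$ and $e_2$ share an endpoint you run Menger between the two endpoints that are not shared. Part $(3)$ is the correct chaining of $(1)$ and $(2)$ through $\kappa\le\lambda$, with the right observation that only the range $\kappa(G)\ge 2$ matters.

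The genuine gap is in part $(2)$. Translating path families of $G$ cannot in general give more than $\lambda$ edge-disjoint $e_1$--$e_2$ paths of $L(G)$: every path of $G$ from $\{u_1,v_1\}$ to $\{u_2,v_2\}$ must cross a minimum edge cut of $G$, so if $G$ is, say, two copies of $K_5$ joined by a matching of four edges with $e_1$ and $e_2$ in different copies, then $\lambda=4$ and a ``single min-cut estimate'' in $G$ between the two terminal sets tops out at $4$, while the claim demands $2\lambda-2=6$ paths. The missing paths exist only because edge-disjoint paths of $L(G)$ may pass through the same vertex of $L(G)$ (the same edge of $G$) provided they use different \emph{transitions}, i.e.\ different pairs of consecutive edges of $G$ meeting at a vertex; your union of two Menger families controls shared edges of $G$ but not shared transitions, and the transitions are exactly the edges of $L(G)$. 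The statement is really a cut statement in $L(G)$: for a partition $E(G)=A\cup B$, each edge of $L(G)$ between $A$ and $B$ is charged to the unique vertex of $G$ where the two edges of $G$ meet, so the cut has size $\sum_v a_vb_v$ with $a_v+b_v=\deg_G(v)$; connectedness of $G$ forces either two vertices with $a_v,b_v\ge 1$, each contributing $a_vb_v\ge \deg_G(v)-1\ge\lambda-1$, or a single such vertex, which is then a cut vertex with $a_v,b_v\ge\lambda$ and contributes $a_vb_v\ge\lambda^2\ge 2\lambda-2$. That counting argument is the actual content of $(2)$ and is absent from your proposal.
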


Palmer \cite{Palmer} gave the $STP$ number of a complete bipartite
graph.

\begin{lemma}\cite{Palmer}\label{lem3}
The $STP$ number of a complete bipartite graph $K_{a,b}$ is
$\lfloor\frac{ab}{a+b-1}\rfloor$.
\end{lemma}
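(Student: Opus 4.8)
The plan is to obtain the exact value $\lfloor ab/(a+b-1)\rfloor$ by combining an edge-counting upper bound with a Nash-Williams--Tutte lower bound (Theorem \ref{th1}). Write $A,B$ for the two partite sets with $|A|=a$, $|B|=b$, so that $K_{a,b}$ has order $n=a+b$ and size $ab$. Every spanning tree of $K_{a,b}$ uses exactly $n-1=a+b-1$ edges, and the trees of a packing are edge-disjoint, so a packing has at most $\lfloor ab/(a+b-1)\rfloor$ members; this settles the upper bound, and it remains to produce that many edge-disjoint spanning trees.

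For the lower bound I set $k=\lfloor ab/(a+b-1)\rfloor$, so that $k\le ab/(a+b-1)$, and I would verify the Nash-Williams--Tutte condition $\|K_{a,b}/\mathscr{P}\|\ge k(|\mathscr{P}|-1)$ for every partition $\mathscr{P}$ of $V(K_{a,b})$. Let $\mathscr{P}$ have blocks $P_1,\dots,P_r$, with $P_i$ containing $a_i$ vertices of $A$ and $b_i$ vertices of $B$, so that $\sum_i a_i=a$, $\sum_i b_i=b$, and each block is nonempty. The edges of $K_{a,b}$ lying inside $P_i$ number exactly $a_ib_i$, whence $\|K_{a,b}/\mathscr{P}\|=ab-\sum_i a_ib_i$. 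Since $k\le ab/(a+b-1)$ and $r-1\ge 0$, it suffices to prove the stronger inequality
$$
ab-\sum_{i=1}^{r} a_ib_i\ \ge\ \frac{ab}{a+b-1}\,(r-1),
$$
which, after clearing denominators, is equivalent to $(a+b-1)\sum_i a_ib_i\le ab\,(n-r)$.

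The heart of the argument, and the step I expect to be the main obstacle, is this last inequality. The idea is to write $n-r=\sum_i(a_i+b_i-1)$, which is legitimate because $\sum_i(a_i+b_i)=n$ and there are $r$ blocks; both sides then become sums over blocks, and the claim reduces to $\sum_i T_i\ge 0$ with $T_i=ab(a_i+b_i-1)-(a+b-1)a_ib_i$. I would in fact establish the sharper per-block bound $T_i\ge 0$ for every nonempty block. Fixing $b_i$ and regarding $T_i$ as a linear function of $a_i$ on $[0,a]$: when $b_i\ge 1$ its endpoint values are $ab(b_i-1)\ge 0$ at $a_i=0$ and $a(a-1)(b-b_i)\ge 0$ at $a_i=a$, forcing $T_i\ge 0$ throughout; when $b_i=0$, nonemptiness gives $a_i\ge 1$ and $T_i=ab(a_i-1)\ge 0$. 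Integrality is what makes this work: the bilinear form $T_i$ is negative on the interior of the segment $a_i+b_i=1$, but its only lattice points there are $(1,0)$ and $(0,1)$, where $T_i=0$. Summing $T_i\ge 0$ over all blocks yields $(a+b-1)\sum_i a_ib_i\le ab(n-r)$, so the Nash-Williams--Tutte condition holds and $K_{a,b}$ contains $k$ edge-disjoint spanning trees, meeting the upper bound and giving $\lfloor ab/(a+b-1)\rfloor$ exactly.
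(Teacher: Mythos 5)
Your proof is correct. Note, however, that the paper offers no proof of this statement at all: Lemma \ref{lem3} is simply quoted from Palmer's survey \cite{Palmer}, so there is no in-paper argument to compare against. Your derivation is the natural self-contained one: the upper bound $\lfloor ab/(a+b-1)\rfloor$ by counting edges against the $a+b-1$ edges per spanning tree, and the lower bound by verifying the Nash--Williams--Tutte condition of Theorem \ref{th1} for every partition. The key computation checks out: with block $P_i$ meeting the sides in $a_i$ and $b_i$ vertices, the condition reduces to $(a+b-1)\sum_i a_ib_i\le ab\sum_i(a_i+b_i-1)$, and your per-block inequality $T_i=ab(a_i+b_i-1)-(a+b-1)a_ib_i\ge 0$ holds by the endpoint argument (values $ab(b_i-1)$ at $a_i=0$ and $a(a-1)(b-b_i)$ at $a_i=a$, both nonnegative when $b_i\ge 1$, with the $b_i=0$ case handled by nonemptiness). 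Your method is in fact the same partition-counting technique the authors themselves use to prove their Lemma \ref{lem7} about $K_n\setminus M$, so it fits the paper's toolkit exactly; the only thing it does not deliver beyond the cited statement is an explicit construction of the spanning trees, which the nonconstructive Nash--Williams--Tutte criterion does not require.
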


\section{Results of $\kappa_k(G)$ and $\lambda_k(G)$ for general $k$}

After the preparation of the above section, we start to give our
main results of this paper.

\subsection{Results for complete graphs}

The following two observations are easily seen.
\begin{observation}\label{obs1}
If $G$ be a connected graph, then $\kappa_k(G)\leq \lambda_k(G)\leq
\delta(G)$.
\end{observation}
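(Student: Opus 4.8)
The plan is to establish the two inequalities separately, each directly from the definitions of $\kappa(S)$ and $\lambda(S)$.

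For the left inequality $\kappa_k(G)\le\lambda_k(G)$, I would first observe that any family of internally disjoint $S$-trees is automatically a family of edge-disjoint $S$-trees: by definition two internally disjoint trees $T,T'$ satisfy $E(T)\cap E(T')=\varnothing$, which is precisely edge-disjointness, the additional requirement $V(T)\cap V(T')=S$ being only a further restriction. Hence for every $k$-set $S$ we have $\kappa(S)\le\lambda(S)$. Taking minima over all $k$-subsets then gives the claim; to get the direction right, let $S^{*}$ be a $k$-set attaining $\lambda_k(G)$, so that $\kappa_k(G)\le\kappa(S^{*})\le\lambda(S^{*})=\lambda_k(G)$.

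For the right inequality $\lambda_k(G)\le\delta(G)$, I would exhibit a single $k$-set $S$ witnessing $\lambda(S)\le\delta(G)$, since the minimum defining $\lambda_k(G)$ is then no larger. Let $v$ be a vertex of minimum degree, $\deg(v)=\delta(G)$, and choose any $k$-set $S$ with $v\in S$ (possible because $|G|\ge k$). Since $v\in S\subseteq V(T)$ for every $S$-tree $T$, and $T$ is a tree on at least $k\ge 2$ vertices, $v$ is not isolated in $T$, so $T$ uses at least one edge incident with $v$. In any family of pairwise edge-disjoint $S$-trees $T_1,\dots,T_\ell$ these chosen edges are distinct and all lie among the $\deg(v)=\delta(G)$ edges at $v$, forcing $\ell\le\delta(G)$. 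Thus $\lambda(S)\le\delta(G)$, whence $\lambda_k(G)\le\lambda(S)\le\delta(G)$.

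Both steps are essentially immediate from the definitions, so I do not expect a real obstacle. The only points needing care are the direction of the inequality after passing to minima (handled by routing the chain through the minimizing set $S^{*}$), and the observation that every tree must consume an edge at the minimum-degree vertex, which relies on $k\ge 2$ so that the $S$-tree is nontrivial and $v$ is genuinely incident to an edge of $T$.
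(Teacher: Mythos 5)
Your proof is correct: the paper states this as an observation that is ``easily seen'' and gives no proof, and your argument is exactly the routine one being implied --- internally disjoint trees are in particular edge-disjoint, so $\kappa(S)\leq\lambda(S)$ for every $S$, and a $k$-set containing a minimum-degree vertex $v$ forces each edge-disjoint $S$-tree to consume a distinct edge at $v$. Both steps, including your care in routing the minima through the set $S^{*}$ attaining $\lambda_k(G)$ and the remark that $k\geq 2$ makes every $S$-tree nontrivial at $v$, are sound.
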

\begin{observation}\label{obs2}
If $H$ is a spanning subgraph of $G$, then $\kappa_k(H)\leq
\kappa_k(G)$ and $\lambda_k(H)\leq \lambda_k(G)$.
\end{observation}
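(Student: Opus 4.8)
The plan is to exploit the fact that a spanning subgraph leaves the vertex set untouched, so that both graphs are compared over exactly the same family of $k$-element vertex subsets, and then to establish monotonicity subset-by-subset. Since $H$ is spanning we have $V(H)=V(G)$ and $E(H)\subseteq E(G)$; in particular every $k$-subset $S\subseteq V(G)$ is also a $k$-subset of $V(H)$, so the minima defining $\kappa_k$ and $\lambda_k$ range over an identical index set in both graphs.

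First I would fix an arbitrary $S\subseteq V(G)$ with $|S|=k$ and compare the local quantities, which I denote $\kappa_H(S),\kappa_G(S)$ and $\lambda_H(S),\lambda_G(S)$ according to the ambient graph. Take a maximum family $T_1,\dots,T_\ell$ of internally disjoint $S$-trees realizing $\kappa_H(S)=\ell$. Because $E(H)\subseteq E(G)$, each $T_i$ is literally a subtree of $G$ as well, and the internal-disjointness conditions $E(T_i)\cap E(T_j)=\varnothing$ and $V(T_i)\cap V(T_j)=S$ depend only on the trees themselves, hence survive verbatim when the ambient graph is enlarged from $H$ to $G$. Thus the same family witnesses $\kappa_G(S)\geq \ell$, giving $\kappa_H(S)\leq \kappa_G(S)$. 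The identical argument with edge-disjointness in place of internal disjointness (dropping the vertex-intersection requirement) yields $\lambda_H(S)\leq \lambda_G(S)$.

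Finally I would pass to the minima. Let $S_0$ attain $\kappa_k(G)=\kappa_G(S_0)$. Then
$$
\kappa_k(H)=\min_{|S|=k}\kappa_H(S)\leq \kappa_H(S_0)\leq \kappa_G(S_0)=\kappa_k(G),
$$
and the same chain with $\lambda$ in place of $\kappa$ gives $\lambda_k(H)\leq \lambda_k(G)$. There is no genuine obstacle in this argument; the only point requiring a moment's care is the direction of the minimum inequality. One cannot merely take $\min$ of both sides of $\kappa_H(S)\leq\kappa_G(S)$ and declare the conclusion immediate, but evaluating the left-hand minimum at the particular subset $S_0$ that realizes the right-hand minimum closes the gap cleanly.
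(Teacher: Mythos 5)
Your proof is correct: any family of internally disjoint (resp.\ edge-disjoint) $S$-trees in $H$ remains such a family in $G$ because $V(H)=V(G)$ and $E(H)\subseteq E(G)$, and passing to the minimum over the common index set of $k$-subsets is handled properly. The paper states this as an observation without proof precisely because this is the intended argument, so your write-up simply supplies the routine details the authors omitted.
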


For a general $k$ and the complete graph $K_n$, $\kappa_k(K_n)$ was
determined by Chartrand et al.; see Theorem \ref{th2}. Now we give
the result for $\lambda_k(K_n)$.

Let $S=\{u_1,u_2,\cdots,u_k\}\subseteq V(G)$ and
$\bar{S}=\{w_1,w_2,\cdots,w_{n-k}\}$. Let $\mathscr{T}$ be a maximum
set of edge-disjoint trees in $G$ connecting $S$. Let
$\mathscr{T}_1$ be the set of trees in $\mathscr{T}$ whose edges
belong to $E(G[S])$, and $\mathscr{T}_2$ be the set of trees
containing at least one edge of $E_G[S,\bar{S}]$. Thus,
$\mathscr{T}=\mathscr{T}_1\cup \mathscr{T}_2$ (Throughout this
paper, $\mathscr{T}$, $\mathscr{T}_1$, $\mathscr{T}_2$ are always
defined as this).

\begin{lemma}\label{lem4}
Let $S\subseteq V(G)$, $|S|=k$ and $T$ be a tree connecting $S$. If
$T\in \mathscr{T}_1$, then $T$ uses $k-1$ edges of $E(G[S])\cup
E_G[S,\bar{S}]$. If $T\in \mathscr{T}_2$, then $T$ uses $k$ edges of
$E(G[S])\cup E_G[S,\bar{S}]$.
\end{lemma}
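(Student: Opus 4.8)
The plan is to reinterpret the quantity ``number of edges of $T$ lying in $E(G[S])\cup E_G[S,\bar{S}]$'' structurally. An edge lies in $E(G[S])$ exactly when both its ends are in $S$, and in $E_G[S,\bar{S}]$ exactly when precisely one end is in $S$; hence $E(G[S])\cup E_G[S,\bar{S}]$ is nothing but the set of edges of $G$ having at least one endpoint in $S$. So for a tree $T$ connecting $S$ I only need to count the edges of $T$ incident with $S$.

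First I would dispose of the case $T\in\mathscr{T}_1$. By definition every edge of $T$ lies in $E(G[S])$, so every endpoint of every edge of $T$ lies in $S$; thus $V(T)\subseteq S$. Since $T$ is a Steiner tree we also have $S\subseteq V(T)$, whence $V(T)=S$ and $T$ is a tree on exactly $k$ vertices. Therefore $T$ has exactly $k-1$ edges, all of them in $E(G[S])\subseteq E(G[S])\cup E_G[S,\bar{S}]$, which gives the first assertion.

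For $T\in\mathscr{T}_2$ the idea is to split the edges of $T$ into those incident with $S$ and those lying entirely in $\bar{S}$. Write $B=V(T)\setminus S\subseteq\bar{S}$; since $T$ contains an edge of $E_G[S,\bar{S}]$ we have $B\neq\varnothing$. Deleting $S$ leaves a forest $F=T\setminus S$ on vertex set $B$; let $c\geq 1$ be its number of components, so $F$ has exactly $|B|-c$ edges, and these are precisely the edges of $T$ with both ends in $\bar{S}$. The remaining edges of $T$ are exactly those incident with $S$, and their number is $|E(T)|-(|B|-c)=(k+|B|-1)-(|B|-c)=k+c-1\geq k$. Hence $T$ uses at least $k$ edges of $E(G[S])\cup E_G[S,\bar{S}]$, which is the content of the second assertion, with equality precisely when $T\setminus S$ is connected.

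The one point I would flag is the correct reading of the statement for $\mathscr{T}_2$: the bound is genuinely the lower bound $k+c-1\ge k$ rather than an equality, as the minimal Steiner tree given by the path $u_1w_1u_2w_2u_3$ (for $k=3$) already uses $k+1$ edges incident with $S$. This is not an obstacle but it dictates how the lemma must be applied: since it is intended to feed a counting argument bounding $|\mathscr{T}|=|\mathscr{T}_1|+|\mathscr{T}_2|$ from above by the total number of edges available in $E(G[S])\cup E_G[S,\bar{S}]$, the per-tree lower bound is exactly what is needed, and no appeal to the maximality of $\mathscr{T}$ is required for the lemma itself.
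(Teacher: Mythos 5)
Your proof is correct; the case $T\in\mathscr{T}_1$ coincides with the paper's, while for $T\in\mathscr{T}_2$ you use the complementary decomposition. The paper deletes the vertices of $T$ lying in $\bar{S}$ and counts the components $C_1,\dots,C_s$ of the remaining forest inside $S$, obtaining $\sum_{i}(|C_i|-1)=k-s$ edges inside $S$ plus at least one $T$-edge from each $C_i$ to $\bar{S}$, hence at least $k$ edges meeting $S$; you instead delete $S$, note that the forest on $B=V(T)\setminus S$ has $|B|-c$ edges, and subtract from $|E(T)|=k+|B|-1$ to get exactly $k+c-1\geq k$ edges meeting $S$. The two computations are equivalent in substance, but yours yields the exact count $k+c-1$ and makes explicit the point you rightly flag: the assertion ``uses $k$ edges'' for $\mathscr{T}_2$ must be read as ``uses at least $k$ edges'' (your path $u_1w_1u_2w_2u_3$ shows equality can fail), and the paper's own step ``there exists one edge of $T$ between each $C_i$ and $\bar{S}$'' likewise certifies only the lower bound. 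That reading is exactly what the applications need, since Theorem~\ref{th3} and Lemmas~\ref{lem5} and~\ref{lem6} bound $|\mathscr{T}_1|(k-1)+|\mathscr{T}_2|k$ from above by $|E(G[S])|+|E_G[S,\bar{S}]|$, which requires a per-tree lower bound on edge usage. No gap in your argument.
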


\begin{proof}
It is easy to see that for each tree $T$ in $\mathscr{T}_1$, $T$
uses $k-1$ edges in $E(G[S])$, namely, $T$ uses $k-1$ edges of
$E(G[S])\cup E_G[S,\bar{S}]$.

For $T\in \mathscr{T}_2$, by deleting all the vertices of $T$ from
$\bar{S}$, we obtain some components of $T$ in $S$, denoted by
$C_1,C_2,\cdots,C_s$. Let $|C_{i}|=c_{i}$. Then $|E(C_{i})|=c_{i}-1$
and $\sum_{i=1}^s(c_{i}-1)=k-s$. Since there exists one edge of $T$
between each $C_i$ and $\bar{S}$, where $1\leq i\leq s$, $T$ uses
$(k-s)+s=k$ edges in $E(G[S])\cup E_G[S,\bar{S}]$.
\end{proof}

\begin{theorem}\label{th3}
For every two integers $n$ and $k$ with $2\leq k\leq n$,
$$
\lambda_k(K_n)=n-\lceil k/2 \rceil.
$$
\end{theorem}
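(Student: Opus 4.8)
The plan is to establish $\lambda_k(K_n) = n - \lceil k/2 \rceil$ by proving matching upper and lower bounds. For the upper bound $\lambda_k(K_n) \leq n - \lceil k/2 \rceil$, I would fix a set $S = \{u_1, \ldots, u_k\}$ and use Lemma~\ref{lem4} to count edges. Let $\ell = \lambda(S)$ with $|\mathscr{T}_1| = \ell_1$ and $|\mathscr{T}_2| = \ell_2$, so $\ell = \ell_1 + \ell_2$. Since all trees in $\mathscr{T}$ are edge-disjoint and every tree consumes edges from $E(K_n[S]) \cup E_{K_n}[S, \bar S]$, Lemma~\ref{lem4} gives that the trees together use at least $(k-1)\ell_1 + k\,\ell_2$ edges from this edge set. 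The number of available edges is $|E(K_n[S])| + |E_{K_n}[S,\bar S]| = \binom{k}{2} + k(n-k)$. This yields the inequality $(k-1)\ell_1 + k\ell_2 \leq \binom{k}{2} + k(n-k)$, and I would combine this with the trivial bound coming from counting edges purely inside $G[S]$ (namely $\ell_1 \leq$ something controlled by $\binom{k}{2}$, or better, noting $\mathscr{T}_1$ trees are spanning trees of $K_k$, so $\ell_1 \leq \lfloor k/2 \rfloor$ by Nash-Williams-Tutte applied to $K_k$). The arithmetic should then collapse to $\ell \leq n - \lceil k/2 \rceil$; verifying the optimum of the resulting integer program is the routine but slightly fiddly part.

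For the lower bound $\lambda_k(K_n) \geq n - \lceil k/2 \rceil$, I would explicitly construct $n - \lceil k/2 \rceil$ edge-disjoint $S$-trees. The natural construction splits into two families. First, for each of the $n-k$ vertices $w_j \in \bar S$, form a star-like tree $T_j$ connecting $w_j$ to all of $S$ (using the $k$ edges $w_j u_1, \ldots, w_j u_k$); these $n-k$ trees are pairwise edge-disjoint because they use disjoint edge sets $E_{K_n}[w_j, S]$. Second, from the edges inside $G[S] \cong K_k$, I would extract $\lfloor k/2 \rfloor$ edge-disjoint spanning trees of $K_k$ — this is exactly the $STP$ number of $K_k$, which equals $\lfloor k/2 \rfloor$ by Nash-Williams-Tutte (or Corollary~\ref{cor2} combined with the known edge-disjoint spanning tree decomposition of complete graphs). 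Each such spanning tree of $K_k$ is itself an $S$-tree lying in $\mathscr{T}_1$, and these are edge-disjoint from the star trees since the latter use only cross edges. Altogether this produces $(n-k) + \lfloor k/2 \rfloor = n - (k - \lfloor k/2 \rfloor) = n - \lceil k/2 \rceil$ edge-disjoint $S$-trees, so $\lambda(S) \geq n - \lceil k/2 \rceil$.

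Finally, since $K_n$ is vertex-transitive the value of $\lambda(S)$ is independent of the choice of $S$, so $\lambda_k(K_n) = \min_S \lambda(S) = n - \lceil k/2 \rceil$, completing the proof. I expect the main obstacle to be the upper bound: one must argue carefully that the edge-counting inequality, together with the constraint $\ell_1 \leq \lfloor k/2 \rfloor$ on the number of spanning trees of $K_k$, forces $\ell_1 + \ell_2 \leq n - \lceil k/2 \rceil$. The subtlety is that maximizing $\ell_1 + \ell_2$ subject to $(k-1)\ell_1 + k\ell_2 \leq \binom{k}{2} + k(n-k)$ pushes toward using as many $\mathscr{T}_1$ trees as possible (since they are cheaper per tree), which is precisely why the independent cap $\ell_1 \leq \lfloor k/2 \rfloor$ is essential; without it the naive count would overshoot. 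I would treat the parity of $k$ separately, as $\lceil k/2 \rceil$ and $\lfloor k/2 \rfloor$ behave differently for odd and even $k$, and check that the bound is attained in both cases by the construction above.
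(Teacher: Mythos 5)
Your proposal is correct, and the upper bound is essentially the paper's own argument: the same use of Lemma~\ref{lem4}, the same counting inequality $(k-1)\ell_1+k\ell_2\leq\binom{k}{2}+k(n-k)$, and the same crucial cap $\ell_1\leq\lfloor k/2\rfloor$ (the paper derives this cap directly from $\ell_1\leq\lfloor\binom{k}{2}/(k-1)\rfloor$ rather than from the $STP$ number of $K_k$, but it is the same bound); your arithmetic does collapse as you expect, using integrality of $\ell$ in the odd case. Where you genuinely diverge is the lower bound. The paper does not construct anything: it simply invokes Chartrand et al.'s result $\kappa_k(K_n)=n-\lceil k/2\rceil$ (Theorem~\ref{th2}) together with $\kappa_k(G)\leq\lambda_k(G)$ (Observation~\ref{obs1}). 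You instead build the $n-\lceil k/2\rceil$ edge-disjoint $S$-trees explicitly: $n-k$ stars centered at the vertices of $\bar S$, plus $\lfloor k/2\rfloor$ edge-disjoint spanning trees of $K_k$ inside $S$, and the count $(n-k)+\lfloor k/2\rfloor=n-\lceil k/2\rceil$ is right. Your route is self-contained (modulo the classical fact that the $STP$ number of $K_k$ is $\lfloor k/2\rfloor$, which follows from the Hamiltonian path/cycle decompositions of $K_k$ or from Theorem~\ref{th1}) and has the advantage of exhibiting the extremal family of trees, which is more informative than the paper's citation; the paper's route is shorter but leans on an external theorem whose proof is of comparable difficulty. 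The appeal to vertex-transitivity at the end is harmless but unnecessary, since both of your bounds already hold for an arbitrary $k$-set $S$.
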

\begin{proof}
Let $G=K_n$. We choose $S\subseteq V(G)$ such that $|S|=k$. Let
$|\mathscr{T}|=y$ and $|\mathscr{T}_1|=x$. From Lemma \ref{lem4},
each tree $T\in \mathscr{T}_1$ uses $k-1$ edges in $E(G[S])\cup
E_G[S,\bar{S}]$, $|\mathscr{T}_1|=x\leq
\lfloor{{k}\choose{2}}/(k-1)\rfloor=\lfloor\frac{k}{2}\rfloor$.
Since each tree $T\in \mathscr{T}_2$ uses $k$ edges in $E(G[S])\cup
E_G[S,\bar{S}]$, we have $|\mathscr{T}_1|(k-1)+|\mathscr{T}_2|k\leq
|E_G[S,\bar{S}]|+|E(G[S])|$, that is, $x(k-1)+(y-x)k\leq
{{k}\choose{2}}+k(n-k)$. So $\lambda_k(G)\leq y\leq
\frac{k-1}{2}+n-k+\frac{x}{k}=n-\lceil\frac{k}{2}\rceil+\frac{x}{k}$
since $x\leq \lfloor\frac{k}{2}\rfloor$ and $y$ is an integer.

From the above arguments, we conclude that $\lambda_k(K_n)\leq
n-\lceil \frac{k}{2} \rceil$. Combining this with Theorem \ref{th2}
and Observation \ref{obs1}, we have $\lambda_k(K_n)=n-\lceil
\frac{k}{2} \rceil$.
\end{proof}

From Theorems \ref{th2} and \ref{th3}, we get that
$\lambda_k(G)=\kappa_k(G)$ for a complete graph $G=K_n$. However,
this is a very special case. Actually, $\lambda_k(G)-\kappa_k(G)$
could be very large. For example, let $G$ be a graph obtained from
two copies of the complete graph $K_n$ by identifying one vertex in
each of them. Then for $k<n$,
$\lambda_k(G)=n-\lceil\frac{k}{2}\rceil$, but $\kappa_k(G)=1$.

\subsection{Graphs with $\kappa_k(G)=n-\lceil k/2 \rceil$ and
$\lambda_k(G)=n-\lceil k/2 \rceil$, respectively}

At first, we give the tight bounds for $\kappa_k(G)$ and
$\lambda_k(G)$:

\begin{proposition}\label{pro1}
For a connected graph $G$ of order $n$ and $3\leq k\leq n$, $1\leq
\kappa_k(G)\leq n-\lceil k/2 \rceil$. Moreover, the upper and lower
bounds are tight.
\end{proposition}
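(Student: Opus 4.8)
The plan is to treat the lower bound, the upper bound, and tightness separately, leaning on the monotonicity of $\kappa_k$ under passage to spanning supergraphs (Observation \ref{obs2}) and on the exact value for complete graphs (Theorem \ref{th2}).

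For the lower bound $\kappa_k(G)\geq 1$, I would argue directly from connectivity: since $G$ is connected, it has a spanning tree, which is in particular an $S$-tree for every $S\subseteq V(G)$ with $|S|=k$. Hence $\kappa(S)\geq 1$ for each such $S$, and taking the minimum over all $k$-subsets yields $\kappa_k(G)\geq 1$. For the upper bound, the key observation is that $G$ is a spanning subgraph of the complete graph $K_n$ on the same vertex set. By Observation \ref{obs2} we then have $\kappa_k(G)\leq \kappa_k(K_n)$, and by Theorem \ref{th2} we have $\kappa_k(K_n)=n-\lceil k/2\rceil$, which gives $\kappa_k(G)\leq n-\lceil k/2\rceil$ at once.

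For tightness, the upper bound is attained by $G=K_n$ itself, again by Theorem \ref{th2}. To show the lower bound is tight, I would take $G$ to be any tree of order $n$ (for instance a path $P_n$). In a tree the path joining any two vertices is unique, so for a set $S$ with $|S|=k\geq 3$ any two $S$-trees both contain the unique path between a fixed pair of vertices of $S$ and therefore share an edge; they cannot be internally disjoint. Thus $\kappa(S)=1$ for every $S$, and hence $\kappa_k(G)=1$, matching the lower bound.

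The whole argument is short, since the upper bound is a one-line consequence of the monotonicity and the complete-graph formula. The only point that needs a little care is the lower-bound tightness: one must verify that in a tree distinct $S$-trees necessarily overlap in edges, so that at most one internally disjoint $S$-tree exists; this is where I expect the (minor) obstacle to lie, and it is handled by the uniqueness of paths in a tree.
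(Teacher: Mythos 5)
Your proof is correct and follows essentially the same route as the paper: the upper bound via Observation \ref{obs2} together with Theorem \ref{th2}, the lower bound from connectedness, and tightness witnessed by $K_n$ and by a tree of order $n$. The only difference is that you spell out the (easy) verification that a tree attains the lower bound via uniqueness of paths, which the paper leaves to the reader.
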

\begin{proof}
From Observation \ref{obs2} and Theorem \ref{th2}, we have
$\kappa_k(G)\leq \kappa_k(K_n)=n-\lceil\frac{k}{2}\rceil$. Since $G$
is connected, then $\kappa_k(G)\geq 1$. The result holds.

One can easily check that the complete graph $K_n$ attains the upper
bound and any tree $T_{n}$ on $n$ vertices attains the lower bound.
\end{proof}

The same upper and lower bounds can be established for the
generalized $k$-edge-connectivity.

\begin{proposition}\label{pro2}
For a connected graph $G$ of order $n$ and $3\leq k\leq n$, $1\leq
\lambda_k(G)\leq n-\lceil k/2 \rceil$. Moreover, the upper and lower
bounds are tight.
\end{proposition}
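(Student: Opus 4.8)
The plan is to mirror the structure of Proposition \ref{pro1} exactly, replacing the connectivity tools with their edge-connectivity analogues, which are already available in the excerpt. First I would establish the upper bound: by Observation \ref{obs2}, since $G$ is a spanning subgraph of $K_n$, we have $\lambda_k(G)\leq \lambda_k(K_n)$, and Theorem \ref{th3} gives $\lambda_k(K_n)=n-\lceil k/2\rceil$. Thus $\lambda_k(G)\leq n-\lceil k/2\rceil$ immediately. For the lower bound, since $G$ is connected, for any $S\subseteq V(G)$ with $|S|=k$ there is at least one spanning tree of $G$, which is in particular a tree connecting $S$, so $\lambda(S)\geq 1$; taking the minimum over all such $S$ gives $\lambda_k(G)\geq 1$.

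Next I would verify that both bounds are tight by exhibiting extremal graphs, exactly as in Proposition \ref{pro1}. For the upper bound, the complete graph $K_n$ attains it by Theorem \ref{th3}. For the lower bound, any tree $T_n$ on $n$ vertices has $\lambda_k(T_n)=1$: a tree has exactly one $S$-Steiner tree available (any spanning tree of a tree is the tree itself, and removing any edge disconnects it), so no two edge-disjoint $S$-trees can coexist, forcing $\lambda(S)=1$ for every $S$ and hence $\lambda_k(T_n)=1$.

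I do not anticipate a serious obstacle here, since every ingredient is either an earlier theorem or an elementary observation; the proposition is essentially the edge-connectivity twin of Proposition \ref{pro1} and follows the same two-line template. If anything requires a moment of care, it is confirming that the tree $T_n$ genuinely achieves $\lambda_k=1$ rather than merely $\lambda_k\geq 1$, but this is clear from the acyclic structure. The proof is therefore short and parallels the preceding proposition almost verbatim.
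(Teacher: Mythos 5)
Your proof is correct and matches the paper's intent exactly: the paper omits an explicit proof of Proposition \ref{pro2}, noting only that ``the same upper and lower bounds can be established'' as in Proposition \ref{pro1}, and your argument (Observation \ref{obs2} with Theorem \ref{th3} for the upper bound, connectedness for the lower bound, and $K_n$ and a tree $T_n$ for tightness) is precisely that parallel argument.
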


Next, we will characterize graphs with $\kappa_k(G)=
n-\lceil\frac{k}{2}\rceil$ and $\lambda_k(G)=
n-\lceil\frac{k}{2}\rceil$, respectively. Let us start with some
lemmas, which will be used later.

\begin{lemma}\label{lem5}
For an even $k$ with $4\leq k\leq n$, $\lambda_k(K_n\setminus e)<
n-\frac{k}{2}$ for any $e\in E(K_n)$.
\end{lemma}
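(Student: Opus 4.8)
The plan is to exhibit a single $k$-subset $S$ whose local value $\lambda(S)$ in $K_n\setminus e$ already falls below $n-\frac{k}{2}$; since $\lambda_k(G)=\min_S\lambda(S)$, one bad choice of $S$ suffices. Write $e=u_1u_2$ and choose $S\subseteq V(K_n)$ with $|S|=k$ containing both endpoints $u_1,u_2$ (possible because $k\geq 4$ and $n\geq k$). With this choice the missing edge lies inside $G[S]$, so in $G=K_n\setminus e$ we have $|E(G[S])|=\binom{k}{2}-1$, while $|E_G[S,\bar{S}]|=k(n-k)$ is unaffected.

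Next I would rerun the counting of Theorem \ref{th3} with this reduced edge budget. By Lemma \ref{lem4}, each tree of $\mathscr{T}_1$ uses $k-1$ edges and each tree of $\mathscr{T}_2$ uses $k$ edges of $E(G[S])\cup E_G[S,\bar{S}]$. Writing $x=|\mathscr{T}_1|$ and $y=|\mathscr{T}|$, the trees of $\mathscr{T}_1$ lie entirely in $G[S]$ and are edge-disjoint, so $x\leq\lfloor(\binom{k}{2}-1)/(k-1)\rfloor$; for even $k$ this equals $\frac{k}{2}-1$, since $\binom{k}{2}/(k-1)=\frac{k}{2}$ and we subtract a proper fraction. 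Summing the edge usage gives $x(k-1)+(y-x)k\leq\binom{k}{2}-1+k(n-k)$.

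Solving this inequality for $y$ yields $y\leq n-\frac{k+1}{2}+\frac{x-1}{k}$, and substituting $x\leq\frac{k}{2}-1$ produces $y\leq n-\frac{k}{2}-\frac{2}{k}<n-\frac{k}{2}$. Because $y$ is an integer and $n-\frac{k}{2}$ is an integer (as $k$ is even), this forces $y\leq n-\frac{k}{2}-1$, hence $\lambda(S)=y<n-\frac{k}{2}$ and therefore $\lambda_k(K_n\setminus e)<n-\frac{k}{2}$.

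I expect the only delicate point to be the floor computation that pins $x$ down to $\frac{k}{2}-1$ rather than $\frac{k}{2}$: it is precisely the loss of this one tree inside $S$, caused by deleting a single edge of $G[S]$, that breaks the tight equality from Theorem \ref{th3}. An alternative placement with $e\in E_G[S,\bar{S}]$ leads to the same conclusion through the bound $y\leq n-\frac{k}{2}-\frac{1}{k}$, so the choice of $S$ is a matter of convenience rather than necessity; putting $e$ inside $S$ gives the cleanest arithmetic.
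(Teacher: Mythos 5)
Your proposal is correct and follows essentially the same argument as the paper: choose $S$ with $|S|=k$ so that $e$ lies inside $K_n[S]$, bound $|\mathscr{T}_1|$ by the reduced edge count of $G[S]$, and run the edge-counting inequality of Lemma \ref{lem4} to get $y<n-\frac{k}{2}$. The only cosmetic difference is that you round $x$ down to the integer $\frac{k}{2}-1$ before substituting, while the paper carries the fractional bound $\frac{k}{2}-\frac{1}{k-1}$ through to the end; both yield a strict inequality.
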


\begin{proof}
Let $G=K_n\setminus e$. We choose $S\subseteq V(G)$ such that
$|S|=k$ and $K_n[S]$ containing $e$. Let $|\mathscr{T}|=y$ and
$|\mathscr{T}_1|=x$. Since every tree $T\in \mathscr{T}_1$ uses
$k-1$ edges in $E(G[S])\cup E_G[S,\bar{S}]$, $|\mathscr{T}_1|=x\leq
\big({{k}\choose{2}}-1\big)/(k-1)= \frac{k}{2}-\frac{1}{k-1}$. From
Lemma \ref{lem4}, each tree $T\in \mathscr{T}_2$ uses $k$ edges of
$E(G[S])\cup E_G[S,\bar{S}]$. Thus
$|\mathscr{T}_1|(k-1)+|\mathscr{T}_2|k\leq
|E_G[S,\bar{S}]|+|E(G[S])|$, that is, $x(k-1)+(y-x)k\leq
{{k}\choose{2}}+k(n-k)-1$. So $\lambda_k(G)=y\leq
\frac{k-1}{2}+n-k+\frac{x-1}{k}\leq
n-\frac{k}{2}-\frac{1}{k-1}<n-\frac{k}{2}$.
\end{proof}

\begin{lemma}\label{lem6}
If $k$ is odd with $3\leq k\leq n$, and $M$ is an edge set of the
complete graph $K_n$ such that $|M|\geq \frac{k+1}{2}$, then
$\lambda_k(K_n\setminus M)< n-\frac{k+1}{2}$.
\end{lemma}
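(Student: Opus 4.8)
The plan is to mimic the counting argument used for Theorem \ref{th3} and Lemma \ref{lem5}, but to exploit the freedom in choosing $S$: I want to select $S$ so that the missing edges of $M$ are forced to lie among the edges incident to $S$, thereby shrinking the pool of usable edges in $E(G[S])\cup E_G[S,\bar S]$ by at least $\frac{k+1}{2}$.

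First I would choose the set $S$. Writing $p=\frac{k+1}{2}$, which is an integer since $k$ is odd, I pick any $p$ edges of $M$ (possible because $|M|\geq \frac{k+1}{2}$) and one endpoint of each; these at most $p$ vertices are incident to all $p$ chosen edges. Since $p=\frac{k+1}{2}\leq k\leq n$, I can enlarge this set to a set $S$ with $|S|=k$, and then at least $\frac{k+1}{2}$ edges of $M$ still have an endpoint in $S$. Setting $G=K_n\setminus M$, $m_S=|M\cap E(K_n[S])|$ and $m_{S\bar S}=|M\cap E_{K_n}[S,\bar S]|$, this says $m_S+m_{S\bar S}\geq \frac{k+1}{2}$, while $|E(G[S])|=\binom{k}{2}-m_S$ and $|E_G[S,\bar S]|=k(n-k)-m_{S\bar S}$.

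Next I would run the edge count exactly as in Theorem \ref{th3}. Let $y=|\mathscr{T}|$ and $x=|\mathscr{T}_1|$. By Lemma \ref{lem4} every tree in $\mathscr{T}_1$ uses $k-1$ edges of $E(G[S])$ and every tree in $\mathscr{T}_2$ uses $k$ edges of $E(G[S])\cup E_G[S,\bar S]$, so edge-disjointness gives $x(k-1)\leq |E(G[S])|\leq \binom{k}{2}$, hence $x\leq \lfloor k/2\rfloor=\frac{k-1}{2}$, together with $x(k-1)+(y-x)k\leq |E(G[S])|+|E_G[S,\bar S]|=\binom{k}{2}+k(n-k)-(m_S+m_{S\bar S})$. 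Dividing the last inequality by $k$ and using $\binom{k}{2}/k=\frac{k-1}{2}$ yields $y\leq n-\frac{k+1}{2}+\frac{x-(m_S+m_{S\bar S})}{k}$. Because $x\leq \frac{k-1}{2}$ and $m_S+m_{S\bar S}\geq \frac{k+1}{2}$, the numerator satisfies $x-(m_S+m_{S\bar S})\leq -1<0$, so $y<n-\frac{k+1}{2}$, and since $\lambda_k(G)\leq \lambda(S)=y$ the claim follows.

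The one step that needs genuine care — the main obstacle — is justifying that $S$ can always be chosen to absorb $\frac{k+1}{2}$ edges of $M$; everything after that is the same bookkeeping as before. The worst case is when the $\frac{k+1}{2}$ selected edges form a matching, and then one really does need $\frac{k+1}{2}$ vertices to be incident to all of them, so it is essential here that $\frac{k+1}{2}\leq k$, i.e. $k\geq 1$, which holds. (Note this is exactly where the hypothesis $|M|\geq \frac{k+1}{2}=\lceil k/2\rceil$ is used, and why a single missing edge would not suffice for odd $k$, in contrast with the even case of Lemma \ref{lem5}.)
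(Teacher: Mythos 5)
Your proof is correct and follows essentially the same counting argument as the paper: choose $S$ so that at least $\frac{k+1}{2}$ edges of $M$ lie in $E(K_n[S])\cup E_{K_n}[S,\bar S]$, then bound $|\mathscr{T}_1|\leq\frac{k-1}{2}$ and count edges of $E(G[S])\cup E_G[S,\bar S]$ used by the trees. The only difference is that you explicitly justify the existence of such an $S$ (by taking one endpoint of each of $\frac{k+1}{2}\leq k$ chosen edges of $M$ and extending to a $k$-set), a step the paper simply asserts.
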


\begin{proof}
Let $G=K_n\setminus M$. We can choose $S\subseteq V(G)$ such that
$|S|=k$ and $|M\cap \big(E(K_n[S])\cup E_{K_n}[S,\bar{S}])|\geq
\frac{k+1}{2}$. Let $|\mathscr{T}|=y$ and $|\mathscr{T}_1|=x$. Since
each tree $T\in \mathscr{T}_1$ uses $k-1$ edges in $E(G[S])\cup
E_G[S,\bar{S}]$, $|\mathscr{T}_1|=x\leq {{k}\choose{2}}/(k-1)=
\frac{k-1}{2}$. From Lemma \ref{lem4}, each tree $T\in
\mathscr{T}_2$ uses $k$ edges of $E(G[S])\cup E_G[S,\bar{S}]$. Thus
$|\mathscr{T}_1|(k-1)+|\mathscr{T}_2|k\leq
|E_G[S,\bar{S}]|+|E(G[S])|$, that is, $x(k-1)+(y-x)k\leq
{{k}\choose{2}}+k(n-k)-\frac{k+1}{2}$. So $\lambda_k(G)=y\leq
\frac{k-1}{2}+n-k+\frac{x}{k}-\frac{k+1}{2k}\leq
n-\frac{k+1}{2}-\frac{1}{2k}<n-\frac{k+1}{2}$.
\end{proof}

\begin{lemma}\label{lem7}
If $n$ is odd and $M$ is an edge set of the complete graph $K_n$
such that $0\leq |M|\leq \frac{n-1}{2}$, then $G=K_n\setminus M$
contains $\frac{n-1}{2}$ edge-disjoint spanning trees.
\end{lemma}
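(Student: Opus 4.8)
The natural tool here is the Nash-Williams-Tutte theorem (Theorem \ref{th1}) applied with $k=\frac{n-1}{2}$, which is a genuine integer precisely because $n$ is odd. Corollary \ref{cor2} alone will not suffice: deleting the $\frac{n-1}{2}$ edges of $M$ all at one vertex $v$ drops $\deg(v)$ to $\frac{n-1}{2}$, so $G=K_n\setminus M$ need not be $(n-1)$-edge-connected, and the corollary's hypothesis fails. The plan is therefore to verify the partition inequality $\|G/\mathscr{P}\|\geq \frac{n-1}{2}\,(|\mathscr{P}|-1)$ directly for every partition $\mathscr{P}$ of $V(G)$, and then to invoke Theorem \ref{th1} to extract the desired $\frac{n-1}{2}$ edge-disjoint spanning trees.

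Fix a partition $\mathscr{P}$ into $r=|\mathscr{P}|$ blocks of sizes $n_1,\dots,n_r$ with $\sum_{i=1}^r n_i=n$. If $r=1$ the right-hand side is $0$ and the inequality is trivial, so I would assume $r\geq 2$. Since deleting $M$ can destroy at most $|M|\leq \frac{n-1}{2}$ crossing edges, the first step is the bound $\|G/\mathscr{P}\|\geq \|K_n/\mathscr{P}\|-\frac{n-1}{2}$, where $\|K_n/\mathscr{P}\|=\binom{n}{2}-\sum_{i=1}^r\binom{n_i}{2}=\frac{1}{2}\bigl(n^2-\sum_{i=1}^r n_i^2\bigr)$. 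Substituting, it suffices to prove the clean inequality $\|K_n/\mathscr{P}\|\geq \frac{(n-1)r}{2}$, equivalently $\sum_{i=1}^r n_i^2\leq n^2-(n-1)r$.

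The key step is to control the worst case of $\sum_{i=1}^r n_i^2$. Because this sum is convex and is maximized, subject to $\sum n_i=n$ and each $n_i\geq 1$, by the most unbalanced distribution (one block of size $n-r+1$ together with $r-1$ singletons), it is enough to check $(n-r+1)^2+(r-1)\leq n^2-(n-1)r$. Writing $s=r-1$, this reduces after expansion to $s^2-ns+(n-1)\leq 0$, which factors as $(s-1)\bigl(s-(n-1)\bigr)\leq 0$ and hence holds for all $1\leq s\leq n-1$, that is, for all $2\leq r\leq n$. This closes the verification and, by Theorem \ref{th1}, yields the $\frac{n-1}{2}$ edge-disjoint spanning trees.

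I expect the only real work to be the optimization step: justifying that the extremal partition is the maximally unbalanced one (a short convexity/majorization argument over the integer simplex) and then confirming that the resulting quadratic in $r$ is nonpositive on the whole range $2\leq r\leq n$. The remaining ingredients—the edge-deletion bound, the trivial case $r=1$, and the final appeal to Nash-Williams-Tutte—are routine.
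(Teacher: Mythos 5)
Your proposal is correct and follows essentially the same route as the paper: reduce to the Nash--Williams--Tutte partition condition, bound the crossing edges of $G$ by those of $K_n$ minus $|M|\leq\frac{n-1}{2}$, and verify the resulting inequality by noting that $\sum_i\binom{n_i}{2}$ (equivalently $\sum_i n_i^2$) is maximized by the partition with one large block and $r-1$ singletons. The paper's final inequality $(n-p)\frac{p-2}{2}\geq 0$ is just a rearrangement of your factorization $(s-1)\bigl(s-(n-1)\bigr)\leq 0$ with $s=p-1$.
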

\begin{proof}
Let $\mathscr{P}=\bigcup_{i=1}^pV_i$ be a partition of $V(G)$ with
$|V_i|=n_i \ (1\leq i\leq p)$, and $\mathcal {E}_p$ be the set of
edges between distinct blocks of $\mathscr{P}$ in $G$. The case
$p=1$ is trivial, thus we assume $p\geq 2$. Then $|\mathcal
{E}_p|\geq {{n}\choose{2}}-\sum_{i=1}^p{{n_i}\choose{2}}-|M|\geq
{{n}\choose{2}}-\sum_{i=1}^p{{n_i}\choose{2}}-\frac{n-1}{2}$. We
will show that
${{n}\choose{2}}-\sum_{i=1}^p{{n_i}\choose{2}}-\frac{n-1}{2}\geq
\frac{n-1}{2}(p-1)$, that is, $(n-p)\frac{n-1}{2}\geq
\sum_{i=1}^p{{n_i}\choose{2}}$. We only need to prove that
$(n-p)\frac{n-1}{2}\geq max\{\sum_{i=1}^p{{n_i}\choose{2}}\}$. Since
$f(n_1,n_2,\cdots,n_p)=\sum_{i=1}^p{{n_i}\choose{2}}$ obtains its
maximum value when $n_1=n_2=\cdots=n_{p-1}=1$ and $n_p=n-p+1$, we
need to show the inequality $(n-p)\frac{n-1}{2}\geq
{{1}\choose{2}}(p-1)+{{n-p+1}\choose{2}}$, that is
$(n-p)\frac{p-2}{2}\geq 0$. It is easy to see that the inequality
holds. Thus, $|\mathcal {E}_p|\geq
{{n}\choose{2}}-\sum_{i=1}^p{{n_i}\choose{2}}-|M|\geq
\frac{n-1}{2}(p-1)$. From Theorem $1$, we know that there exist
$\frac{n-1}{2}$ edge-disjoint spanning trees (Note that we can use
the result of Theorem \ref{th1}, although Nash-Williams and Tutte
considered multigraphs but here we are concerned with the
generalized connectivity and generalized edge-connectivity for
simple graphs).
\end{proof}

\begin{theorem}\label{th4}
Let $G$ be a connected graph of order $n$ and $k$ be an integer such
that $3\leq k\leq n$. Then $\kappa_k(G)=n-\lceil\frac{k}{2}\rceil$
if and only if $G=K_n$ for $k$ even; $G=K_n\setminus M$ for $k$ odd,
where $M$ is an edge set such that $0\leq |M|\leq \frac{k-1}{2}$.
\end{theorem}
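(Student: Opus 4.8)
The plan is to prove both directions, splitting throughout according to the parity of $k$, and to exploit that the hypothesis pins $\lambda_k(G)$ to the same extremal value. Indeed, if $\kappa_k(G)=n-\lceil k/2\rceil$, then Observation~\ref{obs1} gives $\lambda_k(G)\geq \kappa_k(G)=n-\lceil k/2\rceil$, while Proposition~\ref{pro2} gives the reverse inequality, so $\lambda_k(G)=n-\lceil k/2\rceil$ as well; in particular $\delta(G)\geq n-\lceil k/2\rceil$, so the complement $\overline{G}$ is sparse. Throughout, write $G=K_n\setminus M$ with $M=E(\overline{G})$.

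For the \emph{only if} direction I would argue by contradiction using Observations~\ref{obs1} and~\ref{obs2} together with Lemmas~\ref{lem5} and~\ref{lem6}. If $k$ is even and $G\neq K_n$, pick a missing edge $e$; then $G$ is a spanning subgraph of $K_n\setminus e$, so $\kappa_k(G)\leq \kappa_k(K_n\setminus e)\leq \lambda_k(K_n\setminus e)<n-\frac{k}{2}$ by Observation~\ref{obs2}, Observation~\ref{obs1}, and Lemma~\ref{lem5}, contradicting the hypothesis; hence $G=K_n$. If $k$ is odd and $|M|\geq \frac{k+1}{2}$, then Lemma~\ref{lem6} gives $\lambda_k(G)=\lambda_k(K_n\setminus M)<n-\frac{k+1}{2}$, whence $\kappa_k(G)\leq\lambda_k(G)<n-\frac{k+1}{2}$, again a contradiction; hence $|M|\leq \frac{k-1}{2}$.

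For the \emph{if} direction, the case $k$ even is immediate from Theorem~\ref{th2}, and for $k$ odd the upper bound $\kappa_k(G)\leq n-\frac{k+1}{2}$ is Proposition~\ref{pro1}. The substance is the matching lower bound $\kappa_k(G)\geq n-\frac{k+1}{2}$ for $G=K_n\setminus M$ with $|M|\leq\frac{k-1}{2}$, which I would prove by exhibiting, for an arbitrary $S\subseteq V(G)$ with $|S|=k$, a family of $n-\frac{k+1}{2}$ internally disjoint $S$-trees, split into two parts. First, applying Lemma~\ref{lem7} to the complete graph on $S$ (here $k$ is odd and $|M\cap E(G[S])|\leq|M|\leq\frac{k-1}{2}$) yields $\frac{k-1}{2}$ edge-disjoint spanning trees of $G[S]$; these are internally disjoint $S$-trees using only vertices of $S$. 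Second, for each of the $n-k$ vertices $w\in\bar{S}$ I would build one $S$-tree whose only non-terminal vertex is $w$, essentially the star centred at $w$; since distinct $w$ give trees meeting only in $S$ and using disjoint sets of $S$--$\bar{S}$ edges, all $n-k$ of these are internally disjoint from one another and from the first part. Counting gives $\frac{k-1}{2}+(n-k)=n-\frac{k+1}{2}$, as required.

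The main obstacle is that a vertex $w\in\bar{S}$ incident to a missing edge $wu_i\in M$ cannot be the centre of a full star, so $u_i$ must be attached to $w$'s tree through an edge of $G[S]$ not already used by the $\frac{k-1}{2}$ internal spanning trees; each such repair consumes one ``leftover'' edge of $G[S]$. The key accounting is that $G[S]=K_k\setminus(M\cap E(G[S]))$ leaves exactly $\frac{k-1}{2}-|M\cap E(G[S])|$ edges unused by a packing of $\frac{k-1}{2}$ spanning trees, while the number of missing $S$--$\bar{S}$ edges needing repair is at most $|M|-|M\cap E(G[S])|\leq \frac{k-1}{2}-|M\cap E(G[S])|$; thus the edge budget balances precisely because $|M|\leq\frac{k-1}{2}$. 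The delicate point, which I expect to require a careful choice of the spanning-tree packing (or a short exchange argument) rather than a pure count, is to guarantee that each leftover edge can be positioned to reconnect the specific terminal $u_i$ to the specific centre $w$, i.e.\ that for every missing edge $wu_i$ there is an unused edge $u_iu_{i'}$ of $G[S]$ with $wu_{i'}\in E(G)$. This is exactly the step where the threshold $|M|\leq\frac{k-1}{2}$ becomes tight, in agreement with Lemma~\ref{lem6}.
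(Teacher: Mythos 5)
Your ``only if'' direction and the reduction of the ``if'' direction to the lower bound $\kappa_k(K_n\setminus M)\geq n-\frac{k+1}{2}$ for odd $k$ and $|M|\leq\frac{k-1}{2}$ coincide with the paper's argument, and your edge-budget count ($\frac{k-1}{2}-|M\cap E(G[S])|$ leftover edges in $G[S]$ versus at most that many missing $S$--$\bar S$ edges to repair) is the right global accounting. But the step you yourself flag as ``the delicate point'' is a genuine gap, and it is precisely the hard core of the theorem: you propose to first fix a packing of $\frac{k-1}{2}$ spanning trees of $G[S]$ and then hope the leftover edges happen to be positioned so that, for every missing edge $wu_i$, some unused edge $u_iu_{i'}$ with $wu_{i'}\in E(G)$ is available. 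Nothing in a pure count forces this: an arbitrary packing can leave all its spare edges concentrated away from the vertices $u_i$ that need repairing, and you give no exchange argument or selection rule for the packing. As stated, the sufficiency proof is incomplete.

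The paper avoids this positioning problem by reversing the order of operations. It first builds the $n-k$ repaired stars greedily: for each $w_i\in\bar S$ in turn it picks, for every $u_j$ with $w_iu_j\in M$, a repair edge $e_{ij}=u_ju_r$ with $u_r\in N_G(w_i)\cap S$ not used by earlier repairs; the existence of such an edge is guaranteed by a short counting claim (if none existed, $|M|\geq(k-\sum_{r\leq i}x_r)+\sum_{r\leq i}x_r=k>\frac{k-1}{2}$, a contradiction). Only afterwards does it apply Lemma~\ref{lem7} to $G[S]$ minus the at most $\frac{k-1}{2}$ edges consumed by $M$ and by the repairs, and Lemma~\ref{lem7} is indifferent to \emph{which} edges were removed, only to how many. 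So the robustness that your approach would need to engineer into the spanning-tree packing is obtained for free by packing last. To close your gap you would either have to adopt this order, or supply the missing exchange argument showing a packing of $G[S]$ can always be chosen with its spare edges at the prescribed locations.
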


\begin{proof}
First we consider the case that $k$ is even. From Theorem \ref{th2},
we have $\kappa_k(K_n)=n-\frac{k}{2}$. Actually, the complete graph
$K_n$ is the unique graph with this property. We only need to show
that $\kappa_k(K_n\setminus e)<n-\frac{k}{2}$ for any $e\in E(K_n)$.
From Lemma \ref{lem5} and Observation \ref{obs1}, we know that
$\kappa_k(K_n\setminus e)\leq \lambda_k(K_n\setminus e)<
n-\frac{k}{2}$ for $e\in E(K_n)$. Thus, the result holds for $k$
even.

Next we consider the case that $k$ is odd.

\emph{Necessity:} Let $G$ be a graph of order $n$ such that
$\kappa_k(G)=n-\frac{k+1}{2}$. Since $G$ is connected, we can
consider $G$ as a graph obtained by deleting some edges from the
complete graph $K_n$. If $G=K_n\setminus M$ such that $|M|\geq
\frac{k+1}{2}$, then $\kappa_k(K_n\setminus M)\leq
\lambda_k(K_n\setminus M)<n-\frac{k+1}{2}$ by Observation \ref{obs1}
and Lemma \ref{lem6}, a contradiction. Thus, $G=K_n\setminus M$,
where $0\leq |M|\leq \frac{k-1}{2}$.

\emph{Sufficiency:}~~ We will show that $\kappa_k(G)\geq
n-\frac{k+1}{2}$ if $G=K_n\setminus M$ such that $0\leq |M|\leq
\frac{k-1}{2}$. It suffices to prove that $\kappa_k(G)\geq
n-\frac{k+1}{2}$ for $|M|=\frac{k-1}{2}$.

Let $S=\{u_1,u_2,\cdots,u_k\}\subseteq V(G)$ and
$\bar{S}=\{w_1,w_2,\cdots,w_{n-k}\}$. We have the following two
cases to consider:

\textbf{Case 1}. $M\subseteq E(K_n[S])\cup E(K_n[\bar{S}])$.

Let $M'=M\cap E(K_n[S])$ and $M''=M\cap E(K_n[\bar{S}])$. Then
$|M'|+|M''|=|M|=\frac{k-1}{2}$ and $0\leq |M'|,|M''|\leq
\frac{k-1}{2}$. We can consider $G[S]$ as a graph obtained by
deleting $|M'|$ edges from the complete graph $K_k$. From Lemma
\ref{lem7}, there exist $\frac{k-1}{2}$ edge-disjoint spanning trees
in $G[S]$. Actually, these $\frac{k-1}{2}$ edge-disjoint trees are
all trees connecting $S$ in $G[S]$. All these trees together with
the trees $T_i=w_iu_1\cup w_iu_2\cup \cdots \cup w_iu_{k} \ (1\leq
i\leq n-k)$ form $n-\frac{k+1}{2}$ internally disjoint trees
connecting $S$, namely, $\kappa(S)\geq n-\frac{k+1}{2}$ (Note that
the trees connecting $S$ can be edge-disjoint in $G[S]$, but must be
internally disjoint in $G\setminus S$).

\textbf{Case 2}. $M\nsubseteq E(K_n[S])\cup E(K_n[\bar{S}])$.

In this case, there exist some edges of $M$ in $E_{K_n}[S,\bar{S}]$.
Let $M'=M\cap E(K_n[S])$ and $M''=M\cap E(K_n[\bar{S}])$, and let
$|M'|=m_1$ and $|M''|=m_2$. Clearly, $0\leq m_i\leq \frac{k-3}{2} \
(i=1,2)$.

For $w_i\in \bar{S}$, we let $|E_{K_n[M]}[w_i,S]|=x_i$, where $1\leq
i\leq n-k$. Without loss of generality, let $x_1\geq x_2\geq \cdots
\geq x_{n-k}$. Thus $\sum_{i=1}^{n-k}x_i+m_1+m_2=\frac{k-1}{2}$ and
$|E_{G}[w_i,S]|=k-x_i$.

Our basic idea is to seek for some edges in $G[S]$, and let them
together with the edges of $E_G[S,\bar{S}]$ form $n-k$ internally
disjoint trees connecting $S$.

For $w_1\in \bar{S}$, without loss of generality, let
$S_1=\{u_1,u_2,\cdots,u_{x_1}\}$ such that $u_jw_1\in M \ (1\leq
j\leq x_1)$ and $S_2=S\setminus
S_1=\{u_{x_1+1},u_{x_1+2},\cdots,u_{k}\}$. Clearly, $S=S_1\cup S_2$
and $u_jw_1\in E(G) \ (x_1+1\leq j\leq k)$, namely,
$S_2=N_G(w_1)\cap S$. One can see that the tree
$T_1'=w_1u_{x_1+1}\cup w_1u_{x_1+2}\cup \cdots \cup w_1u_{k}$ is a
Steiner tree connecting $S_2$. Our idea is to seek for $x_1$ edges
in $E_G[S_1,S_2]$ and add them to $T_1'$ to form a Steiner tree
connecting $S$. For each $u_j\in S_1 \ (1\leq j\leq x_1)$, we claim
that $|E_G[u_j,S_2]|\geq 1$. Otherwise, let $|E_G[u_j,S_2]|=0$. Then
$|E_{K_n[M]}[u_j,S_2]|=k-x_1$ and $|M|\geq
|E_{K_n[M]}[u_j,S_2]|+d_{K_n[M]}(w_1)\geq (k-x_1)+x_1=k$, which
contradicts to $|M|=\frac{k-1}{2}$. Since $|E_G[u_j,S_2]|\geq 1$ for
each $u_j \ (1\leq j\leq x_1)$, we can find a vertex $u_r \
(x_1+1\leq r\leq k)$ such that $e_{1j}=u_ju_r\in E(G[S])$. Let
$M_1=\{e_{11},e_{12},\cdots,e_{1x_1}\}$ and $G_1=G\setminus M_1$.
Thus the tree $T_1=w_1u_{x_1+1}\cup w_1u_{x_1+2}\cup \cdots \cup
w_1u_{k}\cup e_{11}\cup e_{12}\cup \cdots \cup e_{1x_1}$ is our
desired one.

For $w_2\in \bar{S}$, without loss of generality, let
$S_1=\{u_1,u_2,\cdots,u_{x_2}\}$ such that $u_jw_2\in M \ (1\leq
j\leq x_2)$ and $S_2=S\setminus
S_1=\{u_{x_2+1},u_{x_2+2},\cdots,u_{k}\}$. Clearly, $S=S_1\cup S_2$
and $u_jw_2\in E(G) \ (x_2+1\leq j\leq k)$, namely,
$S_2=N_G(w_2)\cap S$. One can see that the tree
$T_2'=w_2u_{x_2+1}\cup w_2u_{x_2+2}\cup \cdots \cup w_2u_{k}$ is a
Steiner tree connecting $S_2$. Our idea is to seek for $x_2$ edges
in $E_{G_1}[S_1,S_2]$ and add them to $T_2'$ to form a Steiner tree
connecting $S$. For each $u_j\in S_1 \ (1\leq j\leq x_2)$, we claim
that $|E_{G_1}[u_j,S_2]|\geq 1$. Otherwise, we let
$|E_{G_1}[u_j,S_2]|=0$. For $e\notin E_{G_1}[u_j,S_2]$, $e\in M$ or
$e\in M_1=\{e_{11},e_{12},\cdots,e_{1x_1}\}$. Then
$|E_{K_n[M]}[u_j,S_2]|\geq k-x_2-x_1$ and $|M|\geq
|E_{K_n[M]}[u_j,S_2]|+d_{K_n[M]}(w_1)+d_{K_n[M]}(w_2) \geq
(k-x_2-x_1)+x_1+x_2=k$, which contradicts to $|M|=\frac{k-1}{2}$.
Since $|E_{G_1}[u_j,S_2]|\geq 1$ for each $u_j \ (1\leq j\leq x_2)$,
we can find a vertex $u_r \ (x_2+1\leq r\leq k)$ such that
$e_{2j}=u_ju_r\in E(G_1[S])$. Let
$M_2=\{e_{21},e_{22},\cdots,e_{2x_2}\}$ and $G_2=G_1\setminus M_2$.
Thus the tree $T_2=w_2u_{x_2+1}\cup w_2u_{x_2+2}\cup \cdots \cup
w_2u_{k}\cup e_{21}\cup e_{22}\cup \cdots \cup e_{2x_2}$ is our
desired tree. Clearly, $T_2$ and $T_1$ are two edge-disjoint trees
connecting $S$.

For $w_i\in \bar{S} \ (3\leq i\leq n-k)$, without loss of
generality, let $S_1=\{u_1,u_2,\cdots,u_{x_i}\}$ such that
$u_jw_i\in M \ (1\leq j\leq x_i)$ and $S_2=S\setminus
S_1=\{u_{x_i+1},u_{x_i+2},\cdots,u_{k}\}$. Clearly, $S=S_1\cup S_2$
and $w_iu_j\in E(G) \ (x_i+1\leq j\leq k)$, namely,
$S_2=N_G(w_i)\cap S$. One can see the tree $T_i'=w_iu_{x_i+1}\cup
w_iu_{x_i+2}\cup \cdots \cup w_iu_{k}$ is a Steiner tree connecting
$S_2$. Our idea is to seek for $x_i$ edges in $E_{G_{i-1}}[S_1,S_2]$
and add them to $T_i'$ to form a Steiner tree connecting $S$. For
each $u_j\in S_1 \ (1\leq j\leq x_i)$, we claim that
$|E_{G_{i-1}}[u_j,S_2]|\geq 1$. Otherwise, let
$|E_{G_{i-1}}[u_j,S_2]|=0$. For $e\notin E_{G_{i-1}}[u_j,S_2]$, we
have that $e\in M$ or $e\in \bigcup_{r=1}^{i-1}M_r$. Then
$|E_{K_n[M]}[u_j,S_2]|\geq k-x_i-\sum_r^{i-1}x_r$ and $|M|\geq
|E_{K_n[M]}[u_j,S_2]|+\sum_r^id_{K_n[M]}(w_r) \geq
(k-\sum_r^ix_r)+\sum_r^ix_r=k$, which contradicts to
$|M|=\frac{k-1}{2}$. Since $|E_{G_{i-1}}[u_j,S_2]|\geq 1$ for each
$u_j \ (1\leq j\leq x_i)$, we can find a vertex $u_r \ (x_i+1\leq
r\leq k)$ such that $e_{ij}=u_ju_r\in E(G_{i-1}[S])$. Let
$M_i=\{e_{i1},e_{i2},\cdots,e_{ix_i}\}$ and $G_i=G_{i-1}\setminus
M_{i}$. Thus the tree $T_i=w_iu_{x_i+1}\cup w_iu_{x_i+2}\cup \cdots
\cup w_iu_{k}\cup e_{i1}\cup e_{i2}\cup \cdots \cup e_{ix_i}$ is our
desired one (Note that if $x_{i}=0$ then we do not need to search
for some edges of $E(G_{i-1}[S])$ and $T_i=w_iu_{1}\cup w_iu_{2}\cup
\cdots \cup w_iu_{k}$ is our desired tree). Clearly, $T_i$ and $T_j
\ (1\leq j\leq i-1)$ are two edge-disjoint trees connecting $S$.

We continue this procedure until we find out $n-k$ trees connecting
$S$, say $T_1,T_2,\cdots, T_{n-k}$. Now we terminate this procedure.
Clearly, we can consider $G_{n-k}[S]=G[S]\setminus
\bigcup_{i=1}^{n-k}M_i$ as a graph obtained by deleting
$|M'|+\sum_{i=1}^{n-k}|M_i|$ edges from the complete graph $K_k$.
Since $\sum_{i=1}^{n-k}x_i+m_1+m_2=\frac{k-1}{2}$, we have $1\leq
\sum_{i=1}^{n-k}|M_i|+m_1\leq \frac{k-1}{2}$. From Lemma \ref{lem7},
there exist $\frac{k-1}{2}$ edge-disjoint trees connecting $S$ in
$G[S]$ (Note that these trees can be edge-disjoint by the definition
of generalized $k$-connectivity). These trees together with
$T_1,T_2,\cdots,T_{n-k}$ form $n-\frac{k+1}{2}$ internally disjoint
trees connecting $S$, namely, $\kappa(S)\geq n-\frac{k+1}{2}$.

From the above discussion, we get that $\kappa(S)\geq
n-\frac{k+1}{2}$ for $S\subseteq V(G)$, which implies that
$\kappa_k(G)\geq n-\frac{k+1}{2}$. From this together with
Proposition \ref{pro1}, we have $\kappa_k(G)= n-\frac{k+1}{2}$.
\end{proof}

\begin{theorem}\label{th5}
For a connected graph $G$ of order $n$ and $n\geq k\geq 3$,
$\lambda_k(G)=n-\lceil\frac{k}{2}\rceil$ if and only if $G=K_n$ for
$k$ even; $G=K_n\setminus M$ for $k$ odd, where $M$ is an edge set
such that $0\leq |M|\leq \frac{k-1}{2}$.
\end{theorem}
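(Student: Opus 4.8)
The plan is to mirror the structure of Theorem \ref{th4} and reduce everything to results already in hand, since the heavy lifting has been done in Lemmas \ref{lem5} and \ref{lem6} and in the sufficiency argument of Theorem \ref{th4}. As in Theorem \ref{th4}, I would split into the cases $k$ even and $k$ odd, and in each case treat necessity and sufficiency separately.

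For $k$ even, the upper bound $\lambda_k(G)\leq \lambda_k(K_n)=n-\frac{k}{2}$ follows from Observation \ref{obs2} and Theorem \ref{th3}. To see that equality forces $G=K_n$, suppose $G$ is connected with $G\neq K_n$; then $G$ is a spanning subgraph of $K_n\setminus e$ for some edge $e$, so Observation \ref{obs2} together with Lemma \ref{lem5} gives $\lambda_k(G)\leq \lambda_k(K_n\setminus e)<n-\frac{k}{2}$. Conversely $\lambda_k(K_n)=n-\frac{k}{2}$ by Theorem \ref{th3}, so the characterization holds in the even case.

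For $k$ odd, write $G=K_n\setminus M$. For necessity, if $|M|\geq \frac{k+1}{2}$ then Lemma \ref{lem6} yields $\lambda_k(G)<n-\frac{k+1}{2}$, contradicting $\lambda_k(G)=n-\lceil k/2\rceil=n-\frac{k+1}{2}$; hence $0\leq |M|\leq \frac{k-1}{2}$. For sufficiency, I would exploit the inequality $\kappa_k(G)\leq \lambda_k(G)$ from Observation \ref{obs1}: the sufficiency part of Theorem \ref{th4} already shows that every $G=K_n\setminus M$ with $0\leq |M|\leq \frac{k-1}{2}$ satisfies $\kappa_k(G)=n-\frac{k+1}{2}$, so at once $\lambda_k(G)\geq \kappa_k(G)=n-\frac{k+1}{2}$. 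Combined with the upper bound of Proposition \ref{pro2}, this gives $\lambda_k(G)=n-\frac{k+1}{2}$.

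The point to watch is not a genuine obstacle but rather the observation that makes the proof short: every system of internally disjoint $S$-trees is in particular a system of edge-disjoint $S$-trees, so the explicit tree construction on the $\kappa$-side in Theorem \ref{th4} transfers for free to the $\lambda$-side, and no new packing argument is needed. Thus the only care required is to check that the reduction to $K_n\setminus e$ in the even case and the bound of Lemma \ref{lem6} in the odd case apply verbatim, and that Proposition \ref{pro2} supplies the matching upper bound closing the sufficiency direction.
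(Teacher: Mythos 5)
Your proposal is correct and follows essentially the same route as the paper: Theorem \ref{th3} and Lemma \ref{lem5} (via Observation \ref{obs2}) for the even case, and for the odd case Lemma \ref{lem6} for necessity together with Observation \ref{obs1}, the sufficiency part of Theorem \ref{th4}, and Proposition \ref{pro2} for sufficiency. You merely spell out the even case a little more explicitly than the paper does.
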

\begin{proof}
First we consider the case that $k$ is even. From Proposition
\ref{pro2} and Lemma \ref{lem5}, we have that
$\lambda_k(K_n)=n-\frac{k}{2}$ if and only if $G=K_n$.

Next we consider the case that $k$ is odd. If $G=K_n\setminus M \
(0\leq |M|\leq \frac{k-1}{2})$, then $\lambda_k(G)\geq
\kappa_k(G)=n-\frac{k+1}{2}$ by Observation \ref{obs1} and Theorem
\ref{th4}. From this together with Proposition \ref{pro2}, we know
that $\lambda_k(G)=n-\frac{k+1}{2}$. Conversely, assume that
$\lambda_k(G)=n-\frac{k+1}{2}$. Since $G$ is connected, we can
consider $G$ as a graph obtained by deleting some edges from the
complete graph $K_n$. If $G=K_n\setminus M$ such that $|M|\geq
\frac{k+1}{2}$, then $\lambda_k(G)<n-\frac{k+1}{2}$ by Lemma
\ref{lem6}, a contradiction. So $G=K_n\setminus M$, where $0\leq
|M|\leq \frac{k-1}{2}$.
\end{proof}

\noindent \textbf{Remark 1}. The graphs with
$\kappa_k(G)=n-\lceil\frac{k}{2}\rceil$ or
$\lambda_k(G)=n-\lceil\frac{k}{2}\rceil$ has been characterized by
Theorems \ref{th4} and \ref{th5}. A natural question is, for the
lower bounds, whether we can characterize the graphs with
$\kappa_k(G)=1$ or $\lambda_k(G)=1$. It seems not easy to solve such
a problem. Note that the minimal graphs with $\kappa_k(G)=1$ or
$\lambda_k(G)=1$ are the trees of order $n$. So, an interesting
problem could be what is the maximal graphs with $\kappa_k(G)=1$ or
$\lambda_k(G)=1$ ? Actually, one can check that a connected graph
$G$ obtained from the complete graph $K_{n-1}$ by attaching a
pendant edge is a such graph, which is obviously a unique maximum
such graph. However, to characterize all the maximal graphs is left
unsolved. Here maximal (minimal) means that adding (deleting) any
edge with destroy $\kappa_k(G)=1$ or $\lambda_k(G)=1$, whereas
maximum means a such graph that has the largest number of edges.

\subsection{Nordhaus-Gaddum-type results}

Alavi and Mitchem in \cite{Alavi} considered the
Nordhaus-Gaddum-type results for the connectivity and
edge-connectivity. We are concerned with analogous inequalities
involving the generalized $k$-connectivity.

\begin{theorem}\label{th6}
For any graph $G$ of order $n$, we have

$(1)$ $1\leq \kappa_k(G)+\kappa_k(\overline{G})\leq n-\lceil k/2
\rceil$;

$(2)$ $0\leq \kappa_k(G)\cdot \kappa_k(\overline{G})\leq
[\frac{n-\lceil k/2 \rceil}{2}]^2$.

Moreover, the upper and lower bounds are tight.
\end{theorem}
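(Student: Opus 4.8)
The plan is to derive both displayed chains from one observation about edge-disjointness. Fix any $S\subseteq V(G)$ with $|S|=k$ (such an $S$ exists since $n\ge k$). A maximum family of internally disjoint $S$-trees in $G$ and a maximum family of internally disjoint $S$-trees in $\overline{G}$ are, in particular, edge-disjoint within each family; moreover, since $E(G)\cap E(\overline{G})=\varnothing$, every tree of the first family is edge-disjoint from every tree of the second. Hence, viewed inside $K_n=G\cup\overline{G}$, they constitute $\kappa_G(S)+\kappa_{\overline{G}}(S)$ pairwise edge-disjoint $S$-trees, so $\kappa_G(S)+\kappa_{\overline{G}}(S)\le \lambda(S;K_n)$. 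By the edge-counting in Lemma \ref{lem4} and the proof of Theorem \ref{th3}, which bounds the number of edge-disjoint $S$-trees for an arbitrary fixed $S$, we have $\lambda(S;K_n)\le n-\lceil k/2\rceil$. Since $\kappa_k(G)\le \kappa_G(S)$ and $\kappa_k(\overline{G})\le \kappa_{\overline{G}}(S)$, adding these gives the upper bound in $(1)$:
$$\kappa_k(G)+\kappa_k(\overline{G})\le \kappa_G(S)+\kappa_{\overline{G}}(S)\le n-\lceil k/2\rceil.$$

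For the lower bound in $(1)$ I would use the classical dichotomy that at least one of $G,\overline{G}$ is connected: if $G$ is disconnected, then any two vertices in distinct components of $G$ are adjacent in $\overline{G}$, forcing $\overline{G}$ to be connected. A connected graph of order $n\ge k$ satisfies $\kappa_k\ge 1$ by Proposition \ref{pro1} (a spanning tree already contains every $k$-set $S$); hence at least one of $\kappa_k(G),\kappa_k(\overline{G})$ is at least $1$, and their sum is $\ge 1$.

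Part $(2)$ is then formal. The lower bound holds since both factors are nonnegative. For the upper bound I would combine $(1)$ with the AM--GM inequality:
$$\kappa_k(G)\cdot\kappa_k(\overline{G})\le\left(\frac{\kappa_k(G)+\kappa_k(\overline{G})}{2}\right)^2\le\left(\frac{n-\lceil k/2\rceil}{2}\right)^2.$$

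It remains to exhibit extremal graphs. The star $G=K_{1,n-1}$ gives $\kappa_k(G)=1$ (a tree has a unique $S$-tree), while $\overline{G}=K_{n-1}\cup K_1$ is disconnected, so $\kappa_k(\overline{G})=0$; this attains the lower bounds $1$ and $0$ in $(1)$ and $(2)$. Taking $G=K_n$ gives $\kappa_k(G)=n-\lceil k/2\rceil$ by Theorem \ref{th2} and $\kappa_k(\overline{G})=0$, attaining the upper bound in $(1)$. The main obstacle is the tightness of the upper bound in $(2)$: equality in AM--GM forces $\kappa_k(G)=\kappa_k(\overline{G})=\frac{n-\lceil k/2\rceil}{2}$, so one needs a balanced extremal graph, for instance a self-complementary $G$ with $G\cong\overline{G}$ realizing $\kappa_k(G)=\frac{n-\lceil k/2\rceil}{2}$, which requires $n-\lceil k/2\rceil$ to be even. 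Producing such a construction---Paley-type graphs settle the case $k=2$---is the genuinely nontrivial step, in contrast to the formal estimates above.
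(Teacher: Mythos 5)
Your derivation of the inequalities is correct and follows essentially the paper's route: the paper simply writes $\kappa_k(G)+\kappa_k(\overline{G})\leq\kappa_k(K_n)$ and invokes Theorem \ref{th2}, while you justify this more carefully by observing that the two families of trees are mutually edge-disjoint inside $K_n=G\cup\overline{G}$ and passing through $\lambda(S;K_n)\leq n-\lceil k/2\rceil$ (which the proof of Theorem \ref{th3} does establish for every fixed $S$). This is actually a worthwhile refinement, since internally disjoint trees of $G$ and of $\overline{G}$ may share internal vertices, so the combined family need not witness $\kappa_k(K_n)$ directly; the detour through edge-disjointness and $\lambda_k(K_n)=\kappa_k(K_n)$ repairs that. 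The lower bound via ``at least one of $G,\overline{G}$ is connected'' and the AM--GM step for $(2)$ coincide with the paper's argument, as do the extremal examples $K_{1,n-1}$ and $K_n$.

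The genuine gap is exactly the one you flag: the tightness of the upper bound in $(2)$, which the theorem asserts and which therefore must be proved. The paper's construction (its Example 3) is not self-complementary but is balanced in the sense your AM--GM equality condition requires. Take $k=n=4r+1$. By Lemma \ref{lem3}, $K_{2r,2r+1}$ contains $r$ edge-disjoint spanning trees; these use $4r^2$ of the $2r(2r+1)$ edges, leaving a set $M$ of $2r$ edges. Put $G=K_{2r,2r+1}\setminus M$, so $\kappa_n(G)=r$. Then $\overline{G}$ consists of the two cliques $K_{2r}$ and $K_{2r+1}$ joined by the $2r$ edges of $M$; pairing $r$ Hamiltonian paths of $K_{2r}$ with $r$ Hamiltonian paths of $K_{2r+1}$ via $r$ distinct edges of $M$ yields $r$ edge-disjoint spanning trees of $\overline{G}$, and an edge count ($\lfloor(4r^2+2r)/4r\rfloor=r$) shows no more are possible, so $\kappa_n(\overline{G})=r$. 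Hence $\kappa_n(G)\cdot\kappa_n(\overline{G})=r^2=\bigl(\tfrac{n-\lceil n/2\rceil}{2}\bigr)^2$, and the same example attains the upper bound of $(1)$ with both graphs connected. Note that even the paper only achieves this for $k=n$ and explicitly leaves the range $3\leq k<n$ open in a remark, so your instinct that this is the genuinely nontrivial part is sound; but some such construction must be supplied for the ``tightness'' clause of the statement to be proved at all.
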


\begin{proof}
$(1)$ Since $G\cup \overline{G}=K_n$,
$\kappa_k(G)+\kappa_k(\overline{G})\leq \kappa_k(K_n)$. This
together with $\kappa_k(K_n)=n-\lceil\frac{k}{2}\rceil$ results in
$\kappa_k(G)+\kappa_k(\overline{G})\leq n-\lceil\frac{k}{2}\rceil$.
If $\kappa_k(G)+\kappa_k(\overline{G})=0$, then
$\kappa_k(G)=\kappa_k(\overline{G})=0$. Thus $G$ and $\overline{G}$
are all disconnected, which is impossible. So
$\kappa_k(G)+\kappa_k(\overline{G})\geq 1$.

$(2)$ It follows immediately from $(1)$.
\end{proof}

To see that the lower bound of $(1)$ is tight, it suffices to take
$G$ as the complete bipartite graph $K_{1,n-1}$ since
$\kappa_k(K_{1,n-1})+\kappa_k(\overline{K_{1,n-1}})=1+0=1$.

The following observation indicates the graphs attaining the lower
bound of $(2)$.

\begin{observation}\label{obs3}
$\kappa_k(G)\cdot \kappa_k(\overline{G})=0$ if and only if $G$ or
$\overline{G}$ is disconnected.
\end{observation}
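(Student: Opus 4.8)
The plan is to reduce the equivalence to a single-factor statement: for a graph $H$ of order $n$ with $k\leq n$, one has $\kappa_k(H)=0$ if and only if $H$ is disconnected. Once this is in hand, the observation follows from the elementary fact that a product of two nonnegative integers vanishes exactly when at least one of the factors vanishes.

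First I would prove the single-factor claim. By the convention fixed in the Introduction, $\kappa_k(H)=0$ whenever $H$ is disconnected, which gives one implication for free. For the converse, I would invoke the lower bound of Proposition \ref{pro1}: if $H$ is connected of order $n$ with $3\leq k\leq n$, then $\kappa_k(H)\geq 1>0$. Concretely, connectedness guarantees that for every $S\subseteq V(H)$ with $|S|=k$ there is at least one Steiner tree connecting $S$ (for instance, a spanning tree of $H$ serves as such a tree), so $\kappa(S)\geq 1$ for all such $S$, and hence $\kappa_k(H)\geq 1$. Thus $\kappa_k(H)=0$ forces $H$ to be disconnected, which establishes the equivalence.

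With the single-factor characterization in place, both directions of the observation are immediate. If $\kappa_k(G)\cdot\kappa_k(\overline{G})=0$, then $\kappa_k(G)=0$ or $\kappa_k(\overline{G})=0$, so by the claim $G$ or $\overline{G}$ is disconnected. Conversely, if $G$ or $\overline{G}$ is disconnected, the corresponding factor vanishes by definition, so the product is $0$.

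There is essentially no hard step here; the only point deserving care is the standing assumption $k\leq n$, which is exactly what guarantees that a connected graph admits a $k$-subset $S$ together with a Steiner tree connecting it, yielding $\kappa_k(H)\geq 1$. I would also remark, for completeness, that since $G$ and $\overline{G}$ cannot both be disconnected, the condition ``$G$ or $\overline{G}$ is disconnected'' is in fact equivalent to ``exactly one of $G,\overline{G}$ is disconnected,'' although this sharper phrasing is not needed for the argument.
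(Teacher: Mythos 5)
Your argument is correct and matches the paper's (implicit) reasoning: the paper states this as an immediate observation with no proof, relying exactly on the convention that $\kappa_k$ vanishes for disconnected graphs together with the lower bound $\kappa_k(G)\geq 1$ for connected $G$ from Proposition \ref{pro1}. Nothing further is needed.
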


We construct a graph class to show that the two upper bounds of are
tight for $k=n$.

\noindent \textbf{Example 3.}~~Let $n,r$ be two positive integers
such that $n=4r+1$. From Lemma \ref{lem3}, we know that
$\kappa_{n}(K_{2r,2r+1})=\lambda_{n}(K_{2r,2r+1})=r$. Let $\mathcal
{E}$ be the set of the edges of these $r$ spanning trees in
$K_{2r,2r+1}$. Then there exist $2r(2r+1)-4r^2=2r$ remaining edges
in $K_{2r,2r+1}$ except the edges in $\mathcal {E}$. Let $M$ be the
set of these $2r$ edges. Set $G=K_{2r,2r+1}\setminus M$. Then
$\kappa_{n}(G)=r$, $M\subseteq E(\overline{G})$ and $\overline{G}$
is a graph obtained from two cliques $K_{2r}$ and $K_{2r+1}$ by
adding $2r$ edges in $M$ between them, that is, one end of each edge
belongs to $K_{2r}$ and the other belongs to $K_{2r+1}$. Note that
$E(\overline{G})=E(K_{2r})\cup M\cup E(K_{2r+1})$. Now we show that
$\kappa_{n}(\overline{G})\geq r$. As we know, $K_{2r}$ contains $r$
Hamiltonian paths, say $P_{1},P_{2},\cdots,P_{r}$, and so does
$K_{2r+1}$, say $P_{1}',P_{2}',\cdots,P_{r}'$. Pick up $r$ edges
from $M$, say $e_1,e_2,\cdots,e_r$, let $T_i=P_{i}\cup P_{i}'\cup
e_i(1\leq i\leq r)$. Then $T_{1},T_{2},\cdots,T_{r}$ are $r$
spanning trees in $\overline{G}$, namely,
$\kappa_{n}(\overline{G})\geq r$. Since
$|E(\overline{G})|={{2r}\choose{2}}+{{2r+1}\choose{2}}+2r=4r^2+2r$
and each spanning tree uses $4r$ edges, these edges can form at most
$\lfloor\frac{4r^2+2r}{4r}\rfloor=r$ spanning trees, that is,
$\kappa_{n}(\overline{G})\leq r$. So $\kappa_{n}(\overline{G})=r$.
Clearly, $\kappa_{n}(G)+\kappa_{n}(\overline{G})=2r=\frac{n-1}{2}
=n-\lceil\frac{n}{2}\rceil$ and $\kappa_{n}(G)\cdot
\kappa_{n}(\overline{G})=r^2=\big[\frac{n-\lceil n/2
\rceil}{2}\big]^2$.

\noindent \textbf{Remark 2}. The above example only shows that the
upper bound of (2) in Theorem \ref{th6} is tight for the case $k=n$.
A natural question is to find examples showing that the upper bounds
of Theorem \ref{th6} are tight for each $k$ with $3\leq k<n$. Note
that the complete graph $G=K_n$ can attain the upper bound of $(1)$,
but clearly $\overline{G}$ is disconnected. Therefore, when we
require that both $G$ and $\overline{G}$ are connected, is there a
graph which can attain the upper bounds of Theorem \ref{th6}
respectively or simultaneously for each $k$ with $3\leq k\leq n$ ?

\section{Results for $\lambda_3(G)$ and $\kappa_3(G)$}

\subsection{Upper and lower bounds for $\lambda_3(G)$}

From now on, we focus our attention on the generalized
$3$-edge-connectivity. From Proposition \ref{pro2}, we obtained
tight upper and lower bounds of $\lambda_3(G)$, that is, $1\leq
\lambda_3(G)\leq n-2$. Now we give another tight upper and lower
bounds of $\lambda_3(G)$ by the edge-connectivity, that is,
$\frac{3\lambda-2}{4}\leq \lambda_3(G)\leq \lambda$, which will be
used in planar graph and line graph. At first we give a tight upper
bound for $\lambda_k(G)$.

\begin{proposition}\label{pro3}
For any graph $G$ of order $n$, $\lambda_k(G)\leq \lambda(G)$.
Moreover, the upper bound is tight.
\end{proposition}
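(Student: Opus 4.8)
The plan is to reduce the inequality to Menger's theorem by isolating a bottleneck pair of vertices. Recall that the edge-connectivity satisfies $\lambda(G)=\min\{\lambda(x,y;G):x,y\in V(G),\ x\neq y\}$, so I would first fix a pair $x_0,y_0$ of distinct vertices attaining this minimum, that is $\lambda(x_0,y_0;G)=\lambda(G)$. Since $k\leq n$, I can then choose a set $S\subseteq V(G)$ with $|S|=k$ and $\{x_0,y_0\}\subseteq S$, so that it suffices to bound $\lambda(S)$ from above; because $\lambda_k(G)=\min_{|S|=k}\lambda(S)$, any single such $S$ gives $\lambda_k(G)\leq\lambda(S)$.

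The key observation is the following. Let $T_1,\dots,T_\ell$ be a collection of pairwise edge-disjoint trees connecting $S$ that realizes $\lambda(S)$, so $\ell=\lambda(S)$. Because $x_0,y_0\in S\subseteq V(T_i)$ and each $T_i$ is connected, every $T_i$ contains a (unique) path $P_i$ from $x_0$ to $y_0$. As the trees are edge-disjoint, the paths $P_1,\dots,P_\ell$ are pairwise edge-disjoint $x_0$-$y_0$ paths in $G$, and by Menger's theorem their number is at most $\lambda(x_0,y_0;G)$. Hence $\lambda(S)=\ell\leq\lambda(x_0,y_0;G)=\lambda(G)$, which yields $\lambda_k(G)\leq\lambda(G)$. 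The only point requiring care here is that each tree genuinely carries an $x_0$-$y_0$ path, which is immediate from connectivity together with $\{x_0,y_0\}\subseteq S$.

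For tightness I would exhibit a graph attaining the bound. A clean sufficient condition is that $G$ contains $\lambda(G)$ edge-disjoint spanning trees: each such spanning tree is, in particular, a tree connecting every $k$-set $S$, so $\lambda(S)\geq\lambda(G)$ for all $S$, and combined with the inequality just proved this forces $\lambda_k(G)=\lambda(G)$. A trivial instance is any tree (or any connected graph with a bridge), where $\lambda_k(G)=\lambda(G)=1$. To realize equality with larger edge-connectivity I would take two disjoint copies $H_1,H_2$ of a $2\lambda$-edge-connected graph, which contains $\lambda$ edge-disjoint spanning trees by Corollary \ref{cor2}, and join them by exactly $\lambda$ edges chosen so that the $\lambda$ spanning trees of $H_1$ and $H_2$ can be spliced pairwise across the $\lambda$ connecting edges. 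The main obstacle lies entirely on this tightness side: one must verify that the candidate extremal graph simultaneously keeps its edge-connectivity at exactly $\lambda$ (so that the $\lambda$ joining edges form the unique minimum cut and do not inadvertently create a smaller one) and still admits $\lambda$ edge-disjoint global spanning trees obtained by linking the trees on the two sides.
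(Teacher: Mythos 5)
Your proposal is correct, but it diverges from the paper in both halves, so a comparison is worthwhile. For the upper bound, the paper picks a minimum edge cut $M$, chooses $S$ with two vertices in different components of $G\setminus M$, and notes that every $S$-tree must use an edge of $M$, so at most $|M|=\lambda(G)$ edge-disjoint trees exist; you instead pick a bottleneck pair $x_0,y_0$ with $\lambda(x_0,y_0;G)=\lambda(G)$, put both into $S$, and extract pairwise edge-disjoint $x_0$--$y_0$ paths from the trees. These are dual formulations of the same counting idea (yours routes through Menger, the paper's works with the cut directly and is marginally more elementary), and both are sound. For tightness, the routes genuinely differ: the paper exhibits $G=K_k\vee(n-k)K_1$ and explicitly builds $k$ pairwise edge-disjoint (indeed internally disjoint) $S$-trees for every $k$-set $S$, which shows the bound is attained with $\lambda(G)=k$ arbitrary and simultaneously settles tightness for $\kappa_k$; your trivial example (any tree, or any connected graph with a bridge, giving $\lambda_k=\lambda=1$) already suffices for the literal claim ``the upper bound is tight,'' and your general observation that $\lambda(G)$ edge-disjoint spanning trees force $\lambda_k(G)=\lambda(G)$ is correct and clean. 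Your more ambitious two-copies-spliced-across-a-$\lambda$-cut construction is left unverified (as you acknowledge, one must check that the joining edges really realize the edge-connectivity and that the spanning trees splice), but since the tree example already establishes tightness, this gap is not load-bearing; it only means your proof, unlike the paper's, does not exhibit extremal graphs with large edge-connectivity.
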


\begin{proof}
Let $M$ be a $\lambda(G)$-edge-cut of $G$, where $1\leq
\lambda(G)\leq n-1$. Then $G\setminus M$ has at least two
components. We can choose $S=\{v_1,v_2,\cdots,v_k\}$ so that
$S\subseteq V(G)$ and at least two of the $k$ vertices are in
different components. Thus any tree connecting $S$ must contain an
edge in $M$. By the definition of $\lambda(S)$, we get
$\lambda(S)\leq |M|$. So $\lambda_k(G)\leq \lambda(S)\leq
|M|=\lambda(G)$.

Furthermore, we will show that the graph $G=K_k\vee(n-k)K_1 \ (n\geq
3k)$ satisfies that
$\kappa_k(G)=\lambda_k(G)=\kappa(G)=\lambda(G)=\delta(G)=k$ (See
Figure $1$).

\begin{figure}[h,t,b,p]
\begin{center}
\scalebox{0.7}[0.7]{\includegraphics{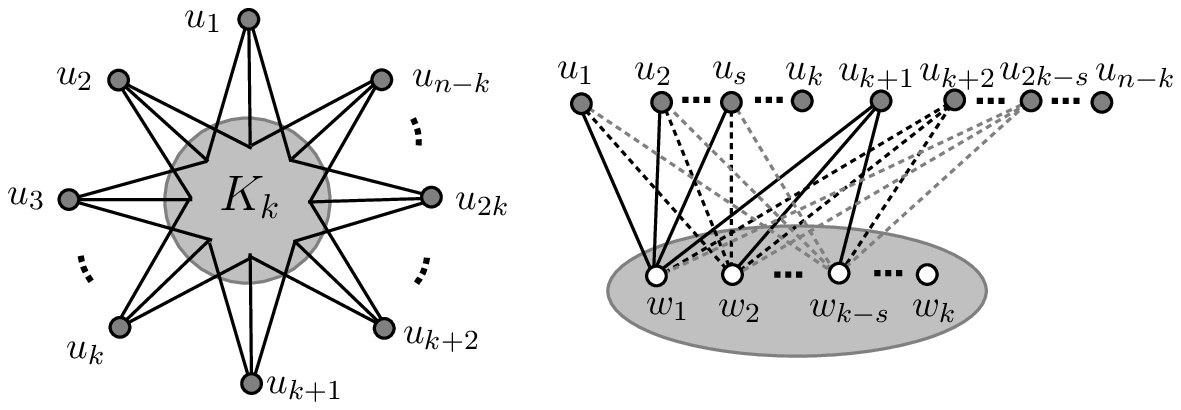}}\\
Figure 1: Graph $G$ with
$\kappa_k(G)=\lambda_k(G)=\kappa(G)=\lambda(G)=\delta(G)=k$.
\end{center}
\end{figure}

Let $W=\{w_1,w_2,\cdots,w_{k}\}$, $U=K_k\setminus
W=\{u_1,u_2,\cdots,u_{n-k}\}$, and $S$ be a $k$-subset of vertices
of $G$. Without loss of generality, let $|S\cap V(U)|=s \ (s\leq
k)$. Then $|S\cap V(W)|=k-s$. Without loss of generality, let
$u_i\in S \ (1\leq i\leq s)$ and $w_j\in S \ (1\leq j\leq k-s)$.
Then the trees $T_i=w_iu_1\cup w_iu_2\cup \cdots \cup w_iu_s \cup
u_{k+i}w_1\cup u_{k+i}w_2\cup \cdots \cup
u_{k+i}w_{k-s}(i=1,2,\cdots,k-s)$ and $T_j=w_ju_1\cup w_ju_2\cup
\cdots \cup w_ju_s \cup w_jw_1\cup w_jw_2\cup \cdots \cup w_jw_{k-s}
\ (j=k-s+1,k-s+2,\cdots,k)$ form $k$ pairwise edge-disjoint trees
connecting $S$, namely $\lambda(S)\geq k$. Combining this with
$\lambda_k(G)\leq \lambda(G)=k$, we get $\lambda_k(G)=k$. Since the
above $k$ trees are also internally disjoint trees connecting $S$,
we have $\kappa_k(G)=k$. So
$\kappa_k(G)=\lambda_k(G)=\kappa(G)=\lambda(G)=\delta(G)=k$.
Clearly, the upper bound of Proposition \ref{pro3} is tight.
\end{proof}

Next we give a tight lower bound for $\lambda_3(G)$.
\begin{proposition}\label{pro4}
Let $G$ be a connected graph with $n$ vertices. For every two
integers $s$ and $r$ with $s\geq 0$ and $r\in \{0,1,2,3\}$, if
$\lambda(G)=4s+r$, then $\lambda_3(G)\geq
3s+\lceil\frac{r}{2}\rceil$. Moreover, the lower bound is tight. We
simply write $\lambda_3(G)\geq \frac{3\lambda-2}{4}$.
\end{proposition}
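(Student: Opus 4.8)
The plan is to bound $\lambda_3(G)$ from below by exhibiting, for an arbitrary $3$-set $S=\{a,b,c\}$, at least $3s+\lceil r/2\rceil$ edge-disjoint trees connecting $S$, and then take the minimum over $S$. The natural engine for this is Lemma~\ref{lem1} (Kriesell's result): a set $\{a,b,c\}$ that is $t'$-edge-connected in $G$ admits $t$ edge-disjoint $\{a,b,c\}$-trees whenever $t'\geq\lfloor\frac{8t+3}{6}\rfloor$. So the first step is to convert the hypothesis $\lambda(G)=4s+r$ into a local edge-connectivity statement for $S$: since $\lambda(G)$ is the global edge-connectivity, every pair $x\neq y$ in $S$ satisfies $\lambda(x,y;G)\geq\lambda(G)=4s+r$, hence $\{a,b,c\}$ is $(4s+r)$-edge-connected in $G$ for \emph{every} choice of $S$.

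The second step is the arithmetic of inverting Kriesell's inequality. I want the largest integer $t$ for which $\lfloor\frac{8t+3}{6}\rfloor\leq 4s+r$ is guaranteed. Writing $\lambda(G)=4s+r$ with $r\in\{0,1,2,3\}$ and solving $\frac{8t+3}{6}\leq 4s+r$ gives $t\leq\frac{24s+6r-3}{8}=3s+\frac{6r-3}{8}$, so one expects $t=3s+\lceil r/2\rceil$ to fall out once the floor is handled case-by-case on $r\in\{0,1,2,3\}$. I would check each residue separately: for $r=0$ one needs $\lfloor\frac{8(3s)+3}{6}\rfloor=\lfloor 4s+\tfrac12\rfloor=4s\leq 4s$, which holds; for $r=1,2$ one wants $t=3s+1$ and must verify $\lfloor\frac{8(3s+1)+3}{6}\rfloor=\lfloor 4s+\tfrac{11}{6}\rfloor=4s+1\leq 4s+r$; for $r=3$ one wants $t=3s+2$ and verifies $\lfloor\frac{8(3s+2)+3}{6}\rfloor=\lfloor 4s+\tfrac{19}{6}\rfloor=4s+3\leq 4s+3$. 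In each case Lemma~\ref{lem1} then yields $t=3s+\lceil r/2\rceil$ edge-disjoint $S$-trees, so $\lambda(S)\geq 3s+\lceil r/2\rceil$, and minimizing over $S$ gives $\lambda_3(G)\geq 3s+\lceil r/2\rceil$.

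For the closed-form restatement $\lambda_3(G)\geq\frac{3\lambda-2}{4}$, I would verify that $3s+\lceil r/2\rceil\geq\frac{3(4s+r)-2}{4}=3s+\frac{3r-2}{4}$ holds for each $r\in\{0,1,2,3\}$, i.e.\ $\lceil r/2\rceil\geq\frac{3r-2}{4}$, a trivial finite check. The remaining task is tightness: I would construct a graph family whose edge-connectivity is exactly $4s+r$ while $\lambda_3$ equals $3s+\lceil r/2\rceil$, presumably built so that some specific $3$-set $S$ is forced to share its scarce cut-edges among the trees, realizing the packing bound of Lemma~\ref{lem4} with equality rather than merely the connectivity bound.

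I expect the only genuinely delicate point to be the case analysis in the floor-inversion step — the hypothesis is clean, but one must be careful that the floor $\lfloor\frac{8t+3}{6}\rfloor$ does not overshoot $4s+r$ for the claimed $t$, especially at the boundaries $r=2$ versus $r=3$ where $\lceil r/2\rceil$ jumps. The reduction from global to local edge-connectivity is immediate, and the closed-form comparison is routine; the genuine content is invoking Kriesell's deep result and matching its threshold to the residues of $\lambda$ modulo $4$.
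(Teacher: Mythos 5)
Your lower-bound argument is correct and is essentially the paper's own: both reduce to Kriesell's Lemma~\ref{lem1} applied to an arbitrary triple $S$ (using that the global edge-connectivity lower-bounds every local edge-connectivity), and both resolve the arithmetic by cases on $r=\lambda\bmod 4$. Your presentation is in fact slightly cleaner: you fix the target $t=3s+\lceil r/2\rceil$ and verify $\lfloor\frac{8t+3}{6}\rfloor\leq 4s+r$ directly, whereas the paper inverts the floor by setting $\lambda=\lfloor\frac{8t+3}{6}\rfloor$ and solving for $t$, which forces it to treat $r=2$ separately because $4s+2$ is not in the image of $t\mapsto\lfloor\frac{8t+3}{6}\rfloor$ (it falls back to $(4s+1)$-edge-connectedness there); your direct check at $t=3s+1$ handles $r=1$ and $r=2$ uniformly. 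All four of your floor evaluations, and the finite check that $\lceil r/2\rceil\geq\frac{3r-2}{4}$, are correct.

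The genuine gap is tightness. The proposition asserts the bound is attained, and you only say you ``would construct'' such a family; this is where the paper does most of its work. Its construction for $\lambda=4s$ takes two cliques $P=X_1\cup X_2$ and $Q=Y_1\cup Y_2$ with all four parts of size $2s$, a vertex $u$ joined to all of $P$, a vertex $v$ joined to all of $Q$, a vertex $w$ joined to $X_1\cup Y_1$, and a perfect matching between $X_2$ and $Y_2$; then for $S=\{u,v,w\}$ it classifies the possible $S$-trees into two types according to which scarce edge sets they consume (the $4s$ edges of $E_G[w,X_1\cup Y_1]$ and the $2s$ matching edges), and solves the resulting small integer program to conclude $\lambda(S)\leq 3s$. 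Adjusting the part sizes gives the residues $r=1,2,3$, and the degenerate case $s=0$ needs separate small examples ($P_n$, $C_n$, and a one-vertex amalgam of copies of $K_4$ for $\lambda=3$). None of this follows from the counting of Lemma~\ref{lem4} alone, so without an explicit family your proof of the ``moreover'' clause is incomplete, even though the inequality itself is fully established.
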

\begin{proof}
Let $\lambda=\lfloor\frac{8t+3}{6}\rfloor$. From Lemma \ref{lem1},
we have $\lambda_3(G)\geq t$ (Note that we can use the result of
Lemma \ref{lem1}, although Kriesell \cite{Kriesell1} considered
graphs containing multiple edges but here we are concerned with the
generalized edge-connectivity for simple graphs).

If $\lambda=4s$, since $\frac{8t+3}{6}$ is not an integer, then
$4s<\frac{8t+3}{6}$. Thus $\lambda_3(G)\geq t>3s-\frac{3}{8}$, which
implies $\lambda_3(G)\geq 3s$. With a similar method, we can obtain
that $\lambda_3(G)\geq 3s+1$ if $\lambda=4s+1$, and $\lambda_3(G)
\geq 3s+2$ if $\lambda=4s+3$.

Note that there exists no integer $t$ such that
$4s+2=\lfloor\frac{8t+3}{6}\rfloor$ if $\lambda=4s+2$. But a graph
$G$ with $\lambda(G)=4s+2$ is also $(4s+1)$-edge-connected, and so
we have $\lambda_3(G)\geq 3s+1$.
$$
\lambda_3(G)\geq \left\{
\begin{array}{ll}
3s&if~\lambda=4s,\\
3s+2&if~\lambda=4s+3,\\
3s+1&if~\lambda=4s+1~or~\lambda=4s+2.
\end{array}
\right.
$$
So the result holds. Simply, we write $\lambda_3(G)\geq
\frac{3\lambda-2}{4}$.

Now we give graphs attaining the lower bound.

For $\lambda=4s$ with $s\geq 1$, we construct a graph $G$ as follows
(see Figure 2 $(a)$): Let $P=X_1\cup X_2$ and $Q=Y_1\cup Y_2$ be two
cliques with $|X_1|=|Y_1|=2s$ and $|X_2|=|Y_2|=2s$. Let $u,v$ be
adjacent to every vertex in $P, Q$, respectively, and $w$ be
adjacent to every vertex in $X_1$ and $Y_1$. Finally, we finish the
construction of the graph $G$ by adding a perfect matching between
$X_2$ and $Y_2$. It can be easily checked that $\lambda=4s$.

\begin{figure}[h,t,b,p]
\begin{center}
\scalebox{0.8}[0.8]{\includegraphics{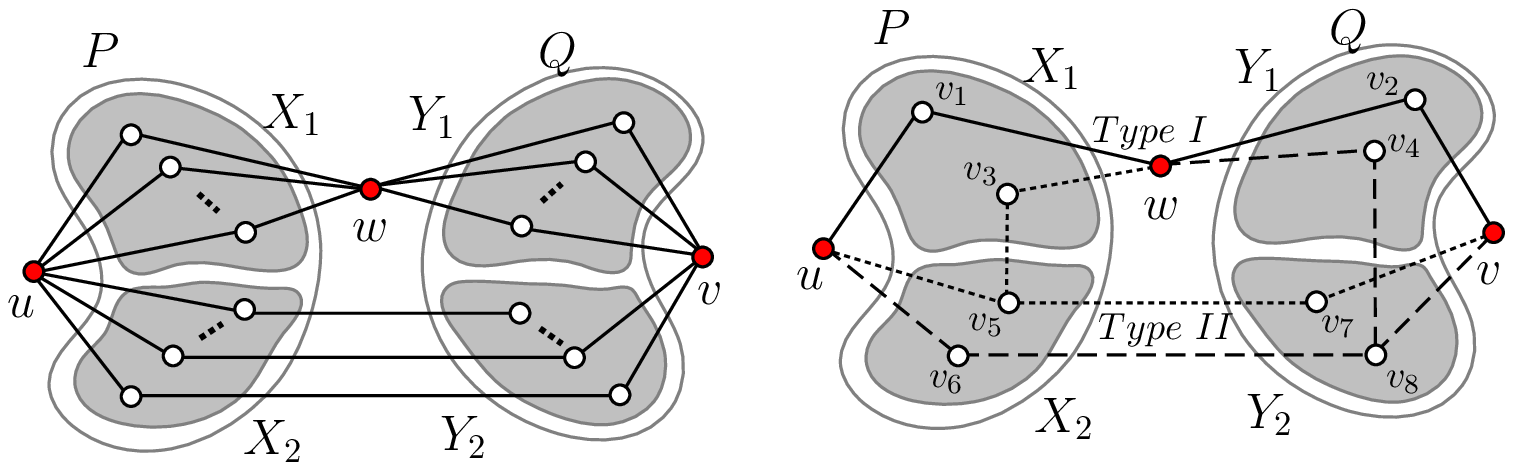}}\\
Figure 2 (a): The graph with $\lambda(G)=4s$ and $\lambda_3(G)=3s$.\\
Figure 2 (b): Two types of trees connecting $\{u,v,w\}.$
\end{center}
\end{figure}

We consider the case $S=\{u,v,w\}$. There exist two kinds of
edge-disjoint trees connecting $S$ (see Figure 2 $(b)$): the tree of
Type $I$ is a path $u$-$v_1$-$w$-$v_2$-$v$; the tree of Type $II$ is
$T_1$ or $T_2$, where $T_1=uv_5\cup v_3v_5\cup wv_3\cup v_5v_7\cup
v_7v$ and $T_2=uv_6\cup v_6v_8\cup v_8v_4\cup v_4w\cup v_8v$,
respectively. We denote the numbers of trees of Type $I$ and Type
$II$ by $x$ and $y$, respectively. Note that $|E_G[w,X_1\cup
Y_1]|=4s$ and each tree of Type $I$ uses two edges of $E_G[w,X_1\cup
Y_1]$, we have $x\leq 2s$. Although each tree of Type $II$ uses one
edge of $E_G[w,X_1\cup Y_1]$, we have $y\leq 2s$ since each tree of
Type $II$ uses one edge of $E_G[X_2,Y_2]$ and $|E_G[X_2,Y_2]|=2s$.
Combining these with $2x+y\leq 4s$, we can derive the optimal
solution $x=s$ and $y=2s$ by solving the following integer linear
programming:
$$\left\{\begin{array}{ll}
 Maximize:~x+y\\Subject~to:~x\leq 2s,~y\leq 2s,~2x+y\leq 4s,\\
 and~~~~~~~~~~~~x,y\geq 0.
 \end{array}\right.
 $$
Thus $\lambda(S)\geq 3s$. We can check that for any other three
vertices of $G$ the number of edge-disjoint trees connecting them is
not less than $3s$. So $\lambda_3(G)=3s$ and the graph $G$ attaining
the lower bound.

For $\lambda=4s+1$, let $|X_1|=|Y_1|=2s+1$ and $|X_2|=|Y_2|=2s$; for
$\lambda=4s+2$, let $|X_1|=|Y_1|=2s+1$ and $|X_2|=|Y_2|=2s+1$; for
$\lambda=4s+3$, let $|X_1|=|Y_1|=2s+2$ and $|X_2|=|Y_2|=2s+1$, where
$s\geq 1$. Similarly, we can check that $\lambda_3(G)=3s+1$ for
$\lambda=4s+1$; $\lambda_3(G)=3s+1$ for $\lambda=4s+2$;
$\lambda_3(G)=3s+2$ for $\lambda=4s+3$.

For the case $s=0$, we have $G=P_n$ such that
$\lambda(G)=\lambda_3(G)=1$; $G=C_n$ such that $\lambda(G)=2$ and
$\lambda_3(G)=1$; $G=H_t$ such that $\lambda(G)=3$ and
$\lambda_3(G)=2$, where $H_t$ denotes the graph obtained from $t$
copies of $K_4$ by identifying a vertex from each of them in the way
shown in Figure $3$.

\begin{figure}[h,t,b,p]
\begin{center}
\scalebox{0.8}[0.8]{\includegraphics{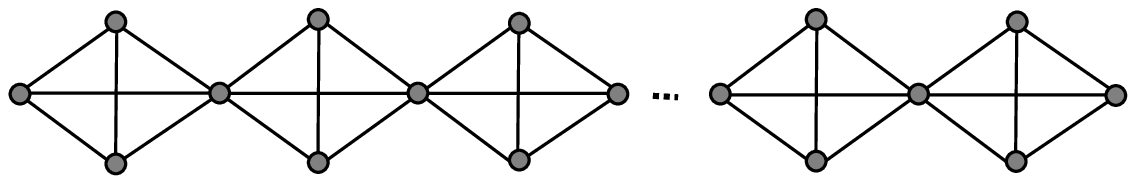}}\\
Figure 3: $\lambda(H_t)=3,\lambda_3(H_t)=2$.
\end{center}
\end{figure}
\end{proof}

As we know, every planar graph $G$ has a vertex of degree at most
$5$, i.e., $\delta(G)\leq 5$. Since $\lambda(G)\leq \delta$, we only
need to consider a planar graph $G$ with edge-connectivity
$\lambda(G)$ at most $5$. From Proposition \ref{pro4}, it can be
deduced that for any graph (not necessarily planar) if
$\lambda(G)=1$, $\lambda_3(G)=1$; if $\lambda(G)=2$,
$\lambda_3(G)\geq 1$; if $\lambda(G)=3$, $\lambda_3(G)\geq 2$; if
$\lambda(G)=4$, $\lambda_3(G)\geq 3$; and if $\lambda(G)=5$,
$\lambda_3(G)\geq 4$. Therefore, the following corollary is obvious.
\begin{corollary}\label{cor2}
If $G$ is a connected planar graph, then $\lambda(G)-1\leq
\lambda_3(G)\leq \lambda(G)$.
\end{corollary}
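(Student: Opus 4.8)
The plan is to obtain the two inequalities from the already-established bounds, using planarity only to cap the edge-connectivity. The upper bound $\lambda_3(G)\leq \lambda(G)$ is immediate from Proposition \ref{pro3} and requires no planarity at all, so the entire content of the statement lies in the lower bound $\lambda_3(G)\geq \lambda(G)-1$.

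First I would record the single structural fact about planar graphs that is needed: every simple planar graph has a vertex of degree at most $5$, so $\delta(G)\leq 5$, and since $\lambda(G)\leq \delta(G)$ this forces $\lambda(G)\leq 5$. This is the crucial reduction. The reason it matters is that the \emph{averaged} lower bound $\lambda_3(G)\geq \frac{3\lambda-2}{4}$ coming from Proposition \ref{pro4} is strictly weaker than $\lambda(G)-1$ once $\lambda(G)$ grows: one has $\frac{3\lambda-2}{4}\geq \lambda-1$ only when $\lambda\leq 2$. Thus the averaged form does not suffice on its own, and it is precisely the cap $\lambda(G)\leq 5$ that rescues the statement by reducing the claim to finitely many values of $\lambda$.

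Next I would run the finite case check, feeding each admissible value of $\lambda(G)$ into the \emph{residue-dependent} form of Proposition \ref{pro4} (writing $\lambda=4s+r$ and using $\lambda_3(G)\geq 3s+\lceil r/2\rceil$) rather than its averaged form. For $\lambda=1$ this gives $\lambda_3\geq 1$; for $\lambda=2$ it gives $\lambda_3\geq 1$; for $\lambda=3$ it gives $\lambda_3\geq 2$; for $\lambda=4$ (so $s=1$, $r=0$) it gives $\lambda_3\geq 3$; and for $\lambda=5$ (so $s=1$, $r=1$) it gives $\lambda_3\geq 3s+\lceil r/2\rceil=4$. In every one of these five cases the resulting bound is exactly $\lambda(G)-1$ (and for $\lambda=1$ even $\lambda(G)$ itself), which is precisely what is wanted. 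Combining these with the upper bound $\lambda_3(G)\leq\lambda(G)$ closes out the proof.

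The main obstacle here is conceptual rather than computational: one must notice that the clean inequality $\lambda_3\geq \frac{3\lambda-2}{4}$ is too lossy to deliver $\lambda_3\geq \lambda-1$ directly, and that what makes the planar statement true is the interplay between the planar degree bound (capping $\lambda$ at $5$) and the sharper case-by-case estimates inside Proposition \ref{pro4}. Once this observation is made, there is no genuine difficulty remaining; the argument is a one-line reduction followed by a short finite table.
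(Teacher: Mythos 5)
Your proposal is correct and follows essentially the same route as the paper: planarity gives $\delta(G)\leq 5$, hence $\lambda(G)\leq 5$, and then the case-by-case form of Proposition \ref{pro4} yields $\lambda_3(G)\geq \lambda(G)-1$ for each $\lambda\in\{1,2,3,4,5\}$, with the upper bound coming from Proposition \ref{pro3}. Your added observation that the averaged bound $\frac{3\lambda-2}{4}$ alone would be insufficient for $\lambda\geq 3$ is accurate and a nice clarification, but the underlying argument is the one the paper gives.
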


\subsection{Results for line graphs}

This section investigate the relation between the generalized
$3$-connectivity and generalized $3$-edge-connectivity of a graph
and its line graph.

\begin{proposition}\label{pro5}
If $G$ is a connected graph, then

$(1)$ $\lambda_3(G)\leq \kappa_3(L(G))$.

$(2)$ $\lambda_3(L(G))\geq \frac{3}{2}\lambda_3(G)-2$.

$(3)$ $\kappa_3(L(L(G))\geq \frac{3}{2}\kappa_3(G)-2$.
\end{proposition}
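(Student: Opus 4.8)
The plan is to prove the three parts in order, using part $(1)$ as the combinatorial core and then deriving $(2)$ and $(3)$ by chaining together bounds already established in the paper.

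For $(1)$ I would take an arbitrary $3$-subset $S'=\{e_1,e_2,e_3\}$ of $V(L(G))$, where each $e_j$ is an edge of $G$, and show directly that $L(G)$ admits at least $\lambda_3(G)$ internally disjoint Steiner trees connecting $S'$; since $S'$ is arbitrary this yields $\kappa_3(L(G))\geq\lambda_3(G)$. First I would pick an endpoint $a_j$ of each $e_j$. A short Hall-type argument (three edges of a simple graph span at least three vertices, and any two edges share at most one vertex) shows the $a_j$ can be chosen pairwise distinct, so $S=\{a_1,a_2,a_3\}$ is a genuine $3$-set and hence carries $\ell:=\lambda(S)\geq\lambda_3(G)$ edge-disjoint Steiner trees $T_1,\dots,T_\ell$ in $G$. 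The decisive observation is that $V(L(T_i))=E(T_i)$, so edge-disjointness of the $T_i$ in $G$ translates into near-vertex-disjointness of the subgraphs $L(T_i)$ in $L(G)$.

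To build the lifted trees, for each $i$ and each terminal $e_j$ I note that $T_i$, being connected and containing $a_j$, has an edge $f_{ij}$ incident with $a_j$; then $e_j$ and $f_{ij}$ share the vertex $a_j$ of $G$, so $\{e_j,f_{ij}\}\in E(L(G))$. Adjoining the vertices $e_1,e_2,e_3$ to $L(T_i)$ through these attaching edges (omitting the attachment when $e_j$ already lies in $E(T_i)$) yields a connected subgraph of $L(G)$ containing $S'$, inside which I extract a Steiner tree $\tilde T_i$. Internal disjointness then follows because $V(\tilde T_i)\subseteq E(T_i)\cup S'$ while the $E(T_i)$ are pairwise disjoint, so the $\tilde T_i$ meet only in $S'$; the accompanying edge-disjointness in $L(G)$ holds because every edge of $\tilde T_i$ has at least one endpoint lying in $E(T_i)$. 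I expect this verification to be the main obstacle, specifically ruling out that two distinct lifted trees reuse an edge of $L(G)$ joining two terminals of $S'$; this is the one point where the simplicity of $G$ and the deliberate choice of distinct $a_j$ (together with care in selecting the $f_{ij}$) must be used.

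Parts $(2)$ and $(3)$ should then be pure inequality-chasing. For $(2)$, apply the lower bound of Proposition \ref{pro4} to the connected graph $L(G)$ to get $\lambda_3(L(G))\geq\frac{3\lambda(L(G))-2}{4}$, insert the bound $\lambda(L(G))\geq 2\lambda(G)-2$ from Lemma \ref{lem2}$(2)$, and simplify to $\frac{3\lambda(G)-4}{2}=\frac{3}{2}\lambda(G)-2$; since $\lambda_3(G)\leq\lambda(G)$ by Proposition \ref{pro3}, this is at least $\frac{3}{2}\lambda_3(G)-2$. For $(3)$, apply part $(1)$ to $L(G)$ to obtain $\kappa_3(L(L(G)))\geq\lambda_3(L(G))$, then use part $(2)$ followed by the inequality $\kappa_3(G)\leq\lambda_3(G)$ of Observation \ref{obs1} to conclude $\kappa_3(L(L(G)))\geq\frac{3}{2}\lambda_3(G)-2\geq\frac{3}{2}\kappa_3(G)-2$.
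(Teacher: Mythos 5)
Your proposal is correct and follows essentially the same route as the paper: for part $(1)$ you pick one distinct endpoint of each of the three edge-vertices, take $\lambda_3(G)$ edge-disjoint Steiner trees for those endpoints in $G$, and transfer them to internally disjoint Steiner trees for $\{e_1,e_2,e_3\}$ in $L(G)$; parts $(2)$ and $(3)$ are the identical chains of inequalities via Proposition \ref{pro4}, Lemma \ref{lem2}$(2)$, Proposition \ref{pro3}, part $(1)$ applied to $L(G)$, and Observation \ref{obs1}. The only divergence is in how the transfer step of $(1)$ is verified: the paper argues by a figure-based case analysis of the six possible types of $T_i\cup T_j$, whereas you give a uniform lifting ($L(T_i)$ plus attaching edges at the chosen endpoints) and isolate the one genuinely delicate point, namely that two lifted trees cannot share an edge of $L(G)$ joining two terminals -- which is indeed excluded because it would force $e_j=a_ja_{j'}=e_{j'}$ in a simple graph; your writeup of this step is arguably more rigorous than the paper's, and your Hall-type justification that the three endpoints can be chosen distinct fills a detail the paper passes over with ``without loss of generality.''
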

\begin{proof}
For $(1)$, let $e_1,e_2,e_3$ be three arbitrary distinct vertices of
the line graph of $G$ such that $\lambda_3(G)=t$ with $t\geq 1$. Let
$e_1=v_1v_1'$, $e_2=v_2v_2'$ and $e_3=v_3v_3'$ be those edges of $G$
corresponding to the vertices $e_1,e_2,e_3$ in $L(G)$, respectively.

Consider three distinct vertices of the six end-vertices of
$e_1,e_2,e_3$. Without loss of generality, let $S=\{v_1,v_2,v_3\}$
be three distinct vertices. Since $\lambda_3(G)=t$, there exist $t$
edge-disjoint trees $T_1,T_2,\cdots,T_t$ connecting $S$ in $G$. We
define a minimal tree $T$ connecting $S$ as a tree connecting $S$
whose subtree obtained by deleting any edge of $T$ does not connect
$S$.

\begin{figure}[h,t,b,p]
\begin{center}
\scalebox{0.7}[0.7]{\includegraphics{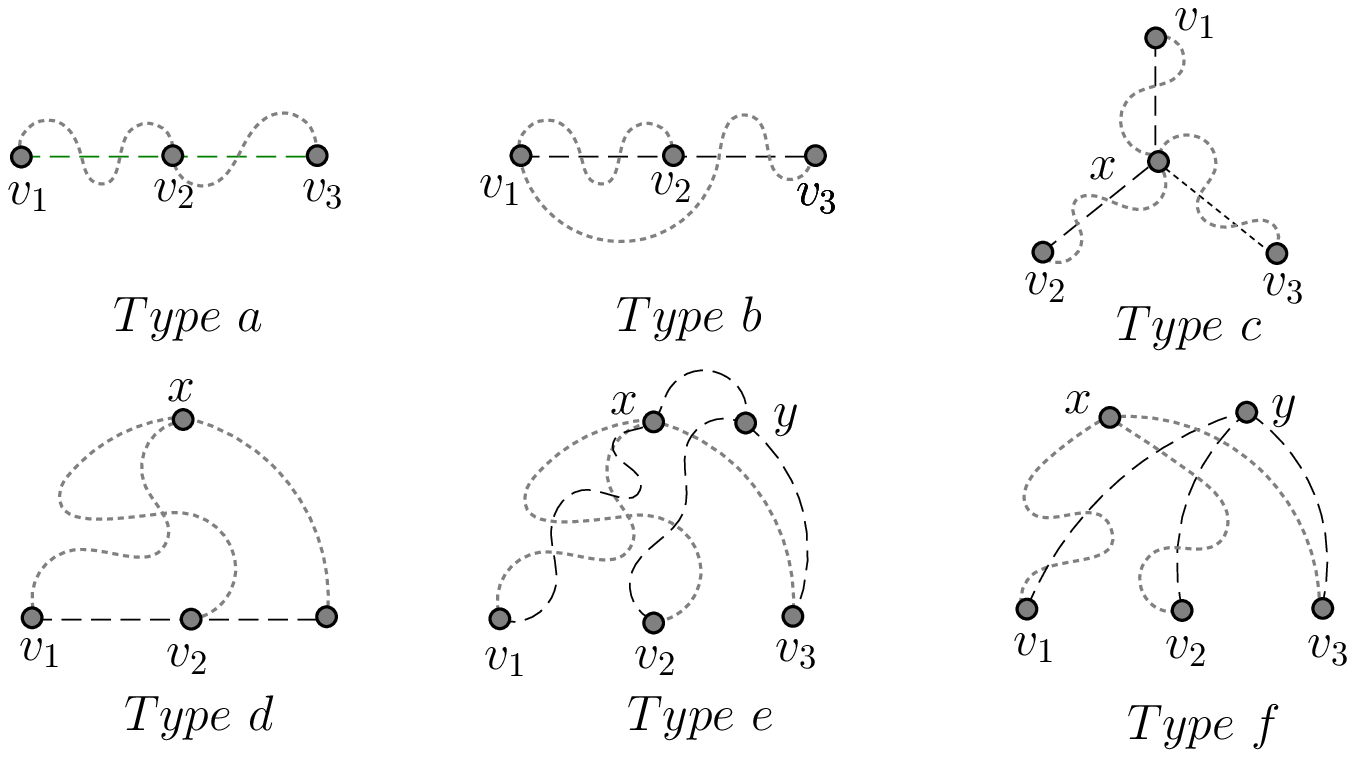}}\\
Figure 4:  Six possible types of $T_i\cup T_j$.
\end{center}
\end{figure}

Choosing any two edge-disjoint minimal trees $T_i$ and $T_j$ $(1\leq
i,j\leq t)$ connecting $S$ in $G$, we will show that the trees
$T_i'$ and $T_j'$ corresponding to $T_i$ and $T_j$ in $L(G)$ are
internally disjoint trees. It is easy to see that $T_i\cup T_j$ has
six possible types, as shown in Figure $4$. Since $T_i$ and $T_j$
are edge-disjoint in $G$, we can find internally disjoint trees
$T_i'$ and $T_j'$ connecting $e_1,e_2,e_3$ in $L(G)$. We give an
example of Type $c$, see Figure $5$. So $\kappa_3(L(G))\geq t$ and
we know that the result holds.

\begin{figure}[h,t,b,p]
\begin{center}
\scalebox{0.7}[0.7]{\includegraphics{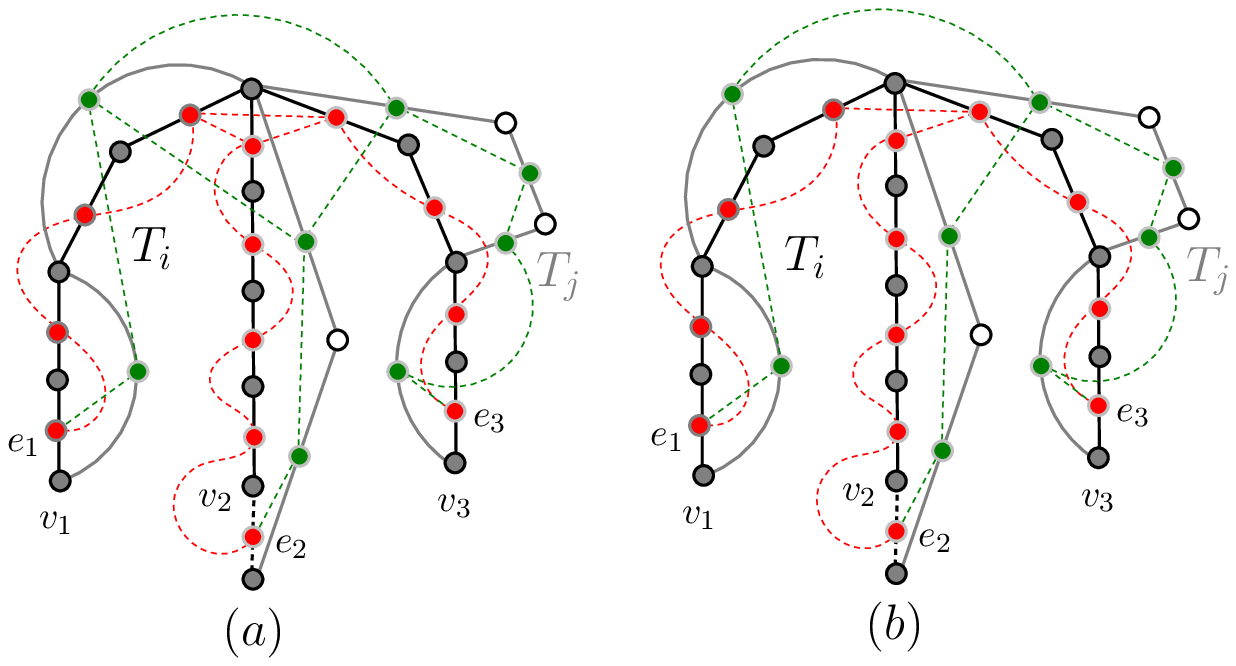}}\\
Figure 5 (a): An example for $T_i$ and $T_j$ connecting $S$ and their line graphs. \\
Figure 5 (b): An example for $T_i'$ and $T_j'$ corresponding to
$T_i$ and $T_j$.
\end{center}
\end{figure}

For $(2)$, from Propositions \ref{pro3} and \ref{pro4} and $(2)$ of
Lemma \ref{lem2} we have that $\lambda_3(L(G))\geq
\frac{3}{4}\lambda(L(G))-\frac{1}{2} \geq
\frac{3}{4}(2\lambda(G)-2)-\frac{1}{2}=\frac{3}{2}\lambda(G)-2\geq
\frac{3}{2}\lambda_3(G)-2$.

For $(3)$, from $(1)$ and $(2)$ of this proposition and Observation
\ref{obs1} we have that $\kappa_3(L(L(G)))\geq \lambda_3(L(G))\geq
\frac{3}{2}\lambda_3(G)-2 \geq\frac{3}{2}\kappa_3(G)-2$.

One can check that $(1)$ of this proposition is tight since $G=C_n$
can attain this bound.
\end{proof}

Let $L^0(G)=G$ and $L^1(G)=L(G)$. Then for $k\geq 2$, the $k$-$th$
iterated line graph $L^k(G)$ is defined by $L(L^{k-1}(G))$. The next
statement follows immediately from Proposition \ref{pro5} and a
routine application of recursions.

\begin{corollary}
$\lambda_3(L^k(G))\geq (\frac{3}{2})^k(\kappa_3(G)-4)+4$, and
$\kappa_3(L^k(G))\geq
(\frac{3}{2})^{\lfloor\frac{k}{2}\rfloor}(\kappa_3(G)-4)+4$.
\end{corollary}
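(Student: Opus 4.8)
The plan is to treat both inequalities as linear first-order recursions and to linearize them by subtracting off the common fixed point $4$ of the map $x\mapsto \frac{3}{2}x-2$. For the first inequality I would induct on $k$. Writing $a_k=\lambda_3(L^k(G))$, part $(2)$ of Proposition \ref{pro5} applied to the connected graph $L^{k-1}(G)$ gives $a_k\geq \frac{3}{2}a_{k-1}-2$. The base case $k=0$ is Observation \ref{obs1}, which yields $a_0=\lambda_3(G)\geq \kappa_3(G)=(\frac{3}{2})^0(\kappa_3(G)-4)+4$. For the inductive step, assuming $a_{k-1}\geq (\frac{3}{2})^{k-1}(\kappa_3(G)-4)+4$ and using that $x\mapsto \frac{3}{2}x-2$ is increasing, one gets $a_k\geq \frac{3}{2}a_{k-1}-2\geq (\frac{3}{2})^k(\kappa_3(G)-4)+4$, the additive constant $+4$ being exactly stable under the recursion. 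This is the required bound.

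For the second inequality the only change is that the relevant recursion, coming from part $(3)$ of Proposition \ref{pro5}, advances the index by two rather than one: with $c_k=\kappa_3(L^k(G))$ and $H=L^{k-2}(G)$ we obtain $c_k\geq \frac{3}{2}c_{k-2}-2$, equivalently $c_k-4\geq \frac{3}{2}(c_{k-2}-4)$. I would therefore iterate along an arithmetic progression of step two and treat the two parities of $k$ separately. If $k=2m$, iterating $m$ times down to $c_0=\kappa_3(G)$ gives $c_k-4\geq (\frac{3}{2})^m(\kappa_3(G)-4)$ with $\lfloor k/2\rfloor=m$. If $k=2m+1$, the iteration terminates at $c_1=\kappa_3(L(G))$, and I would bound this base term by chaining Observation \ref{obs1} with part $(1)$ of Proposition \ref{pro5}, namely $\kappa_3(G)\leq \lambda_3(G)\leq \kappa_3(L(G))=c_1$; hence $c_1-4\geq \kappa_3(G)-4$ and, since $(\frac{3}{2})^m>0$, $c_k-4\geq (\frac{3}{2})^m(c_1-4)\geq (\frac{3}{2})^{\lfloor k/2\rfloor}(\kappa_3(G)-4)$. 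Adding $4$ gives the claim in both parities.

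There is no serious obstacle here; the corollary is a formal consequence of Proposition \ref{pro5}. The two points that need a little care are (i) the substitution $x\mapsto x-4$, dictated by the fixed point of $x\mapsto \frac{3}{2}x-2$, which is what makes the powers of $\frac{3}{2}$ emerge cleanly, and (ii) the base case of the odd branch for the second inequality, where passing from $\kappa_3(G)$ to $\kappa_3(L(G))$ requires combining Observation \ref{obs1} and part $(1)$ of Proposition \ref{pro5} rather than the step-two recursion itself. I would also flag that since $\kappa_3(G)-4$ may be negative, each multiplication by a power of $\frac{3}{2}$ must be applied to a genuine inequality and in the correct direction; this is legitimate precisely because the factor is positive and the underlying map is increasing, so the inequalities are preserved throughout.
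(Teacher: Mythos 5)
Your proof is correct and is precisely the ``routine application of recursions'' that the paper invokes without writing out: iterating Proposition \ref{pro5}(2) from the base case $\lambda_3(G)\geq\kappa_3(G)$ (Observation \ref{obs1}) for the first bound, and iterating Proposition \ref{pro5}(3) with step two for the second, handling the odd-$k$ base case via Observation \ref{obs1} combined with Proposition \ref{pro5}(1). Your care about the fixed point $4$ and the sign of $\kappa_3(G)-4$ is appropriate but does not change anything; there is no divergence from the paper's intended argument.
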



\begin{thebibliography}{11}

\bibitem{Alavi} Y. Alavi, J. Mitchem, \emph{The connectivity
and edge-connectivity of complementary graphs}, Lecture Notes in
Math. 186(1971), 1-3.

\bibitem{bondy} J.A. Bondy, U.S.R. Murty, \emph{Graph Theory}, GTM 244,
Springer, 2008.

\bibitem{Chartrand3} G. Chartrand, \emph{A graph-theoretic approach to a
communication problem}, SIAM  J. Appl. Math. 14(1966), 778-781.

\bibitem{Chartrand1} G. Chartrand, F. Kappor, L. Lesniak, D. Lick,
\emph{Generalized connectivity in graphs}, Bull. Bombay Math.
Colloq. 2(1984), 1-6.

\bibitem{Chartrand4} G. Chartrand, F. Okamoto, P. Zhang,
\emph{Rainbow trees in graphs and generalized connectivity},
Networks 55(4)(2010), 360-367.

\bibitem{Grotschel1} M. Gr\"{o}tschel,
\emph{The Steiner tree packing problem in $VLSI$ design}, Math.
Program. 78(1997), 265-281.

\bibitem{Chartrand2} G. Chartrand, M. Stewart, \emph{The connectivity of
line graphs}, Math. Ann. 182(1969), 170-174.

\bibitem{Grotschel2} M. Gr\"{o}tschel, A. Martin, R. Weismantel,
\emph{Packing Steiner trees: A cutting plane algorithm and
commputational results}, Math. Program. 72(1996), 125-145.

\bibitem{Hakimi} S. Hakimi, J. Mitchem, E. Schmeichel,
\emph{Short proofs of theorems of Nash-Williams and Tutte}, Ars
Combin. 50(1998), 257-266.

\bibitem{Jain} K. Jain, M. Mahdian, M. Salavatipour,
\emph{Packing Steiner trees}, in: Pro. of the 14th $ACM$-$SIAM$
symposium on Discterte Algorithms, Baltimore, 2003, 266-274.

\bibitem{Kriesell1} M. Kriesell, \emph{Edge-disjoint trees containing
some given vertices in a graph}, J. Combin. Theory, Ser.B, 88(2003),
53-65.

\bibitem{Kriesell2} M. Kriesell, \emph{Edge-disjoint Steiner trees
in graphs without large bridges}, J. Graph Theory 62(2009), 188-198.

\bibitem{Lau} L. Lau,
\emph{An approxinate max-Steiner-tree-packing min-Steiner-cut
theorem}, Combinatorica 27(2007), 71-90.

\bibitem{LLSun} H. Li, X. Li, Y. Sun, \emph{The generalied $3$-connectivity
of Casesian product graphs}, Discrete Math. Theor. Comput. Sci.
14(1)(2012), 43-54.

\bibitem{LL} S. Li, X. Li, \emph{Note on the hardness of generalized
connectivity}, J. Combin. Optimization, 24(2012), 389-396.

\bibitem{LLL} S. Li, W. Li, X. Li, \emph{The generalized connectivity
of complete bipartite graphs}, Ars Combin. 104(2012).

\bibitem{LLL2} S. Li, W. Li, X. Li, \emph{The generalized connectivity of complete
equipartition $3$-partite graphs}, Bull. Malays. Math. Sci. Soc.,
accepted.

\bibitem{LLShi} S. Li, X. Li, Y. Shi, \emph{The minimal size of a graph
with generalized connectivity $\kappa_3(G)=2$}, Australasian J.
Combin. 51(2011), 209-220.

\bibitem{LLZ} S. Li, X. Li, W. Zhou, \emph{Sharp bounds for the
generalized connectivity $\kappa_3(G)$}, Discrete Math. 310(2010),
2147-2163.

\bibitem{Nash} C.St.J.A. Nash-Williams,
\emph{Edge-disjonint spanning trees of finite graphs}, J. London
Math. Soc. 36(1961), 445-450.

\bibitem{Okamoto} F. Okamoto, P. Zhang, \emph{The trees connectivity
of regular complete bipartite graphs}, J. Combin. Math. Combin.
Comput. 74(2010), 279-293.

\bibitem{Palmer} E. Palmer, \emph{On the spanning tree packing number
of a graph: a survey}, Discrete Math. 230(2001), 13-21.

\bibitem{Sherwani} N.A. Sherwani, \emph{Algorithms for VLSI physical
design automation}, 3rd Edition, Kluwer Acad. Pub., London, 1999.

\bibitem{Tutte} W. Tutte,
\emph{On the problem of decomposing a graph into $n$ connected
factors}, J. London Math. Soc. 36(1961), 221-230.

\bibitem{West} H. Wu, D. West, \emph{Packing Steiner trees and
$S$-connectors in graphs}, J. Combin. Theory, Ser.B 102(2012),
186-205.
\end{thebibliography}
\end{document}